\DeclareMathAlphabet{\mathfr}{U}{euf}{m}{n}
\newtheorem{teo}{Theorem}[section]
\newtheorem{defi}{Definition}[section]
\newtheorem{rem}{Remark}[section]
\newtheorem{lema}{Lemma}[section]
\newtheorem{coro}{Corollary}[section]
\newtheorem{prop}{Proposition}[section]
\newcommand{\Q}{\mathbb Q}
\newcommand{\Qbar}{\overline{\mathbb Q}}
\newcommand{\Z}{\mathbb Z}
\newcommand{\M}{\mathbb{M}}
\DeclareMathOperator{\Gal}{Gal}
\DeclareMathOperator{\GL}{GL}
\DeclareMathOperator{\End}{End}
\DeclareMathOperator{\Hom}{Hom}
\DeclareMathOperator{\Frob}{Frob}
\DeclareMathOperator{\Aut}{Aut}
\DeclareMathOperator{\id}{id}
\DeclareMathOperator{\GAP}{GAP}
\DeclareMathOperator{\Ind}{Ind}
\DeclareMathOperator{\Isom}{Isom}
\DeclareMathOperator{\I}{I}
\DeclareMathOperator{\II}{II}
\DeclareMathOperator{\Res}{Res}
\DeclareMathOperator{\Twist}{Twist}
\DeclareMathOperator{\Tr}{Tr}
\DeclareMathOperator{\sgn}{sgn}
\numberwithin{equation}{section}
\title{The twisting representation of the\\ $L$-function of a curve}
\author{Francesc Fit\'e and Joan-C. Lario}
\date{\today}
\begin{document}
\maketitle

\begin{abstract}
Let $C$ be a smooth projective curve defined over a number field and let $C'$ be a twist of $C$. In this article we relate the $\ell$-adic representations attached to the $\ell$-adic Tate modules of the Jacobians of~$C$ and~$C'$ through an Artin representation. This representation induces \emph{global} relations between the local factors of the respective Hasse-Weil $L$-functions. We make these relations explicit in a particularly illustrating situation. For every $\Qbar$-isomorphism class of genus~2 curves defined over~$\Q$ with $\Aut(C)\simeq D_8$ or $D_{12}$, except for a finite number, we choose a representative curve $C/\Q$ such that, for every isomorphism $\phi\colon C'\rightarrow C$ satisfying some mild condition, we are able to determine either the local factor $L_{ p}(C'/\Q,T)$ or the product $L_{ p}(C'/\Q,T)\cdot L_{ p}(C'/\Q,-T)$ from the local factor $L_{p}(C/\Q,T)$.
\end{abstract}

\section{Introduction}

Let $C$ and $C'$ be smooth projective curves of genus $g\geq 1$ defined over a number field~$k$ that become isomorphic over an algebraic closure of $k$ (that is, they are \emph{twisted} of each other). The aim of this article is to relate the $\ell$-adic representations attached to the $\Q_\ell$-vector spaces $V_\ell(C)$  and $V_\ell(C')$. Here, for a prime~$\ell$, $V_\ell(C)$ stands for $\Q_\ell\otimes T_\ell(C)$, where $T_\ell(C)$ denotes the $\ell$-adic Tate module of the Jacobian variety $J(C)$ attached to $C$ (and analogously for $C'$).

The case of quadratic twists of elliptic curves is well known. If $E$ and $E'$ are elliptic curves defined over $k$ that become isomorphic over a quadratic extension $L/k$, then there exists a character $\chi$ of $\Gal(L/k)$ such that
\begin{equation}\label{tramp}
V_\ell(E')\simeq \chi\otimes V_\ell(E)\,.
\end{equation}
This translates into a relation of local factors of the corresponding Hasse-Weil $L$-functions. Indeed, one has that for every prime $\mathfrak p$ of $k$ unramified in $L$
\begin{equation}\label{equation: local factors}
L_\mathfrak p(E'/k,T)=L_\mathfrak p(E/k,\chi(\Frob_{\mathfrak p})T)\,.
\end{equation}

So from now on, we will assume that the genus of $C$ (and $C'$) is $g\geq 2$, and we will focus on obtaining a generalization of relation (\ref{tramp}).

Let us fix some notation. Hereafter, $\overline \Q$ denotes a fixed algebraic closure of $\Q$ that is assumed to contain $k$ and all of its algebraic extensions. For any algebraic extension $F/k$, we will write $G_F:=\Gal(\overline \Q/F)$. For abelian varieties $A$ and $B$ defined over $k$, denote by $\Hom_F(A,B)$ the $\Z$-module of homomorphisms from $A$ to $B$ defined over $F$, and by $\End_F(A)$ the ring of endomorphisms of $A$ defined over~$F$. Write $\Hom_F^0(A,B)$ for the $\Q$-vector space $\Q\otimes\Hom_F(A,B)$, and $\End_F^0(A)$ for the algebra $\Q\otimes\End_F(A)$. We write $A\sim_F B$ to denote that $A$ and $B$ are isogenous over $F$.

\subsection{Relating $\ell$-adic representations of twisted curves}

Let $\Aut(C)$ be the group of automorphisms defined over $\Qbar$ of $C$, and let $\Isom(C',C)$ be the set of all isomorphisms from $C'$ to $C$. Throughout the paper, $L/k$ (resp., $K/k$) will denote the minimal extension of $k$ where all the elements in $\Isom(C',C)$ (resp., in $\Aut(C)$) are defined. A theorem of Hurwitz asserts that $\Aut(C)$ has order less or equal than $84(g-1)$. Since the isomorphism $\phi$ induces a bijection between $\Aut(C)$ and $\Isom(C',C)$, in particular, we have that these two sets are finite. Thus, the extensions $K/k$ and $L/k$ are finite. Since the curves $C$ and $C'$ are defined over $k$, the extensions $K/k$ and $L/k$ are Galois extensions. Clearly, $K/k$ is a subextension of $L/k$. We can now state the principal result of Section~2.

\begin{teo}
The representation
$$
\theta_C\colon G_C:=\Aut(C)\rtimes_{\lambda_C}\Gal(K/k)\rightarrow \Aut_\Q(\End_K^0(J(C)))\,,
$$
defined by equation (\ref{equation: twist rep}) and
called the \emph{twisting representation of $C$}, satisfies that, for every $\theta_C$-twist $\phi\colon C'\rightarrow C$, there is an inclusion of $\Q_\ell[G_k]$-modules
\begin{equation}\label{equation: generalization}
V_\ell(C')\subseteq (\theta_C\circ\lambda_\phi)\otimes V_\ell(C)\,.
\end{equation}
Here $\lambda_\phi\colon\Gal(L/k)\rightarrow G_C$ stands for the monomorphism defined by equation (\ref{equation: emb}).
\end{teo}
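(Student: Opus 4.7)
My plan is to exhibit an explicit $\Q_\ell$-linear map
\[
\Phi\colon V_\ell(C')\longrightarrow \End_K^0(J(C))\otimes_\Q V_\ell(C),\qquad \Phi(v)\;=\;\sum_{\gamma\in\Aut(C)}\gamma\otimes \gamma^{-1}\phi_*(v),
\]
and to show that it is injective and $G_k$-equivariant for the action $(\theta_C\circ\lambda_\phi)\otimes V_\ell(C)$ on the target. Here $\phi_*\colon V_\ell(C')\to V_\ell(C)$ is the $\Q_\ell$-linear isomorphism induced functorially by $\phi$, and $\Aut(C)$ is viewed inside $\End_K^0(J(C))^\times$ via its natural faithful action on $J(C)$.

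The preparatory step is the cocycle calculus. For $\sigma\in G_k$, set $\xi(\sigma):=\phi\circ\sigma(\phi)^{-1}\in\Aut(C)$. The functoriality identity $\sigma\phi_*\sigma^{-1}=\sigma(\phi)_*$, together with $\sigma(\phi)=\xi(\sigma)^{-1}\phi$, gives the key transformation rule
\[
\phi_*(\sigma v)\;=\;\xi(\sigma)\cdot\sigma\phi_*(v),\qquad v\in V_\ell(C'),
\]
and a one-line computation shows $\xi(\sigma\tau)=\xi(\sigma)\,\sigma(\xi(\tau))$. This is exactly the identity that makes $\sigma\mapsto(\xi(\sigma),\sigma|_K)$ a group homomorphism $\Gal(L/k)\to G_C=\Aut(C)\rtimes_{\lambda_C}\Gal(K/k)$; up to the paper's conventions, this is the $\lambda_\phi$ of~(\ref{equation: emb}).

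For $G_k$-equivariance I would apply $\sigma$ to $\Phi(v)$ term by term. Under the convention that $\theta_C((\gamma,\tau))(\alpha)=\gamma\,\tau(\alpha)$ (which reproduces the sign character in the elliptic-curve setting of~(\ref{tramp})),
\[
\sigma\cdot\Phi(v)\;=\;\sum_\gamma \xi(\sigma)\,\sigma|_K(\gamma)\;\otimes\;\sigma|_K(\gamma)^{-1}\,\sigma\phi_*(v).
\]
Reindex first by $\delta=\sigma|_K(\gamma)$ (a permutation of $\Aut(C)$) and then by $\epsilon=\xi(\sigma)\delta$ (left multiplication by $\xi(\sigma)\in\Aut(C)$, also a permutation), and apply the transformation rule in the form $\xi(\sigma)\sigma\phi_*(v)=\phi_*(\sigma v)$; the sum collapses to $\sum_\epsilon\epsilon\otimes\epsilon^{-1}\phi_*(\sigma v)=\Phi(\sigma v)$. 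Injectivity is immediate: composing $\Phi$ with the action map $\End_K^0(J(C))\otimes V_\ell(C)\to V_\ell(C)$, $\alpha\otimes w\mapsto \alpha w$, sends $v$ to $|\Aut(C)|\,\phi_*(v)$, which vanishes only if $v=0$.

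The main obstacle is cosmetic: matching the specific convention adopted in~(\ref{equation: twist rep}) for how $\Aut(C)\rtimes_{\lambda_C}\Gal(K/k)$ acts on $\End_K^0(J(C))$. If the authors use right-multiplication-by-the-inverse, conjugation, or a different embedding of $\Aut(C)$ in place of the left-multiplication convention above, one only needs to replace $\Phi$ by $v\mapsto\sum_\gamma \gamma^{-1}\otimes\gamma\phi_*(v)$ or to post-compose with the anti-automorphism $\gamma\mapsto\gamma^{-1}$ of $\Aut(C)$; the reindexing and cocycle steps go through verbatim, and the injectivity argument is unchanged.
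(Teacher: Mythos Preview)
Your argument is correct. The convention you guessed for $\theta_C$ is exactly the one in equation~(\ref{equation: twist rep}), namely $\theta_C((\alpha,\sigma))(\psi)=\alpha\circ{}^\sigma\psi$, so your left-multiplication version of $\Phi$ is the right one and the hedging in your final paragraph is unnecessary. The reindexing and cocycle calculus are clean; in fact the two substitutions $\delta=\sigma|_K(\gamma)$ and $\epsilon=\xi(\sigma)\delta$ can be combined into the single bijection $\gamma\mapsto \xi(\sigma)\,{}^\sigma\gamma$ of $\Aut(C)$, after which $\epsilon^{-1}=\sigma|_K(\gamma)^{-1}\xi(\sigma)^{-1}$ matches the second tensor factor on the nose.

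Your route is genuinely different from the paper's. The paper does not construct the embedding directly: it first invokes Theorem~3.1 of~\cite{Fit10} (Remark~\ref{remark: Fit10}) to obtain $V_\ell(C')\subseteq\theta(C,C')\otimes V_\ell(C)$, where $\theta(C,C')$ is the natural $\Gal(L/k)$-representation on $\Hom_L^0(J(C),J(C'))$; it then shows in Proposition~\ref{tetator} that precomposition with $\phi$ identifies $\theta(C,C')$ with the representation $\theta_\phi$ on $\End_L^0(J(C))$, and finally in Theorem~\ref{factorteta} that $\theta_\phi=\theta_C\circ\lambda_\phi$. The last equality is where the $\theta_C$-twist hypothesis $\End_L^0(J(C))=\End_K^0(J(C))$ enters, since $\theta_\phi$ lives on the former space while $\theta_C$ lives on the latter. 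Your construction bypasses all of this: it is self-contained (no appeal to~\cite{Fit10}), and since your image lies in the $\Q$-span of $\Aut(C)\subset\End_K^0(J(C))$, it never sees $\End_L^0(J(C))$ at all and therefore does not need the $\theta_C$-twist hypothesis. What the paper's approach buys in exchange is the intermediate identification $\theta(C,C')\simeq\theta_C\circ\lambda_\phi$ as representations on the full endomorphism algebra, which is what is actually used downstream (e.g.\ in Theorem~\ref{supguap} and Corollary~\ref{signe}) to decompose $\theta(C,C')\otimes V_\ell(C)$ completely rather than merely to embed $V_\ell(C')$ into it.
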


This result encompasses Remark~\ref{remark: Fit10}, Proposition~\ref{tetator} and Theorem~\ref{factorteta}, and we refer to the remaining results of Section~$2$ for proofs of the well-definition of the objects involved in the statement. Requiring a twist~$C'$ of~$C$ to be a~$\theta_C$-twist is a mild condition that we precise in Definition~\ref{defi: theta_C twist}. In Proposition~\ref{gengen}, we show that~(\ref{equation: generalization}) indeed generalizes~(\ref{tramp}).

\subsection{Applications}

In the particular cases that we will look at, one can in fact compute the whole decomposition of $(\theta_C\circ\lambda_\phi)\otimes V_\ell(C)$. This leads to a relation between local factors of $C$ and local factors of $C'$ of the style of (\ref{equation: local factors}), that is, a relation written in terms of an Artin representation. Such kind of \emph{global} relations have been proved to be most useful when one is interested in the study of the behaviour of the local factor at a varying prime (e.g. generalized Sato-Tate distributions; see Section 4 of \cite{Fit10} and especially \cite{FS12}).

The essential feature of the cases considered in which one can perform the computation of the decomposition of $(\theta_C\circ\lambda_\phi)\otimes V_\ell(C)$ is the splitting of the Jacobian $J(C)$ over $K$ as the power of an elliptic curve $E/K$ (what we call the completely splitted Jacobian case). In this article we restrict to the case in which~$E$ does not have complex multiplication (CM), and we refer to~\cite{FS12} for a treatment of the case in which~$E$ has CM.

After the considerations of general type for the completely splitted Jacobian case of Section~3, we restrict our attention in Section~4 to the situation in which~$C$ is a genus~2 curve defined over $\Q$ with $\Aut(C)\simeq D_8$ (resp. $D_{12}$). Recall that every such a curve is $\overline \Q$-isomorphic to a curve $C_u$ in the family of (\ref{eqd8}) (resp. in the family of (\ref{eqd12})) for some $u$ in $\Q^* \smallsetminus\{1/4,9/100\}$ (resp. in $\Q^*\smallsetminus\{1/4,-1/50\}$). We then prove the following result.

\begin{teo}\label{theorem: application} Let $\phi:C'\rightarrow C$ be a twist of $C=C_u$  with $\Aut(C)\simeq D_8$ (resp. $\Aut(C)\simeq D_{12}$). Assume that $u$ does not belong to the finite list (\ref{uCMD8}) (resp. (\ref{uCMD12})). If $V_\ell(C')$ is a simple $\Q_\ell[G_K]$-module, then for every prime $p$ unramified in $L/\Q$, we have
$$
L_{ p}(C/\Q,\theta_C\circ \lambda_\phi,T)=
\begin{cases}
L_{ p}(C'/\Q,T)^4 & \text{if $f=1$}\\
L_{ p}(C'/\Q,T)^2L_{ p}(C'/\Q,-T)^2 & \text{if $f=2$,}
\end{cases}
$$
where $f$ denotes the residue class degree of~$p$ in~$K$.
\end{teo}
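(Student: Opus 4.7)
The plan is to combine Theorem~1.1 with the explicit form the twisting representation takes under the completely split Jacobian hypothesis. From Theorem~1.1 we have the containment of $\Q_\ell[G_\Q]$-modules
\[
V_\ell(C') \subseteq W := (\theta_C\circ\lambda_\phi)\otimes_{\Q_\ell} V_\ell(C).
\]
Since $u$ avoids the CM list, the elliptic curve $E$ with $J(C)\sim_K E^2$ has no complex multiplication, so $\End_K^0(J(C))\simeq M_2(\Q)$ and $\theta_C$ is a $4$-dimensional $\Q$-representation. Thus $W$ has dimension $16$ and both sides of the claimed identity are polynomials of degree $16$ in $T$; the task reduces to identifying $W$, as a representation of the decomposition group at $p$, with four copies of $V_\ell(C')$ (with signs twisted on two of them when $f=2$).

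Restricting to $G_K$, the monomorphism $\lambda_\phi$ lands in $\Aut(C)$, and $\theta_C|_{\Aut(C)}$ is the conjugation action coming from the embedding $\Aut(C)\simeq D_8$ or $D_{12}\hookrightarrow\GL_2(\Q)$ induced by the $\Aut(C)$-action on $E^2$. Using the explicit form of this embedding (developed in Section~3 in general and Section~4 for the two relevant cases), I would decompose $W|_{G_K}$ into its isotypic components. Over $G_L$ the Artin factor $\theta_C\circ\lambda_\phi$ is trivial and $\phi$ canonically identifies $V_\ell(C)$ with $V_\ell(C')$, hence $W|_{G_L}\simeq V_\ell(C')^{\oplus 4}|_{G_L}$. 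The simplicity of $V_\ell(C')$ as a $\Q_\ell[G_K]$-module, together with Schur's lemma (applicable because $\End^0(E)=\Q$), then lets me lift this to the $G_K$-isomorphism
\[
W|_{G_K}\simeq V_\ell(C')^{\oplus 4}.
\]
In particular, when $f=1$ the Frobenius $\Frob_p$ lies (up to conjugation) in $G_K$ and the first case of the theorem follows at once from comparing characteristic polynomials of $\Frob_p$ on both sides.

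For $f=2$ the Frobenius acts through the nontrivial coset of $\Gal(K/\Q)$, and the outer conjugation that $\theta_C$ introduces must be analyzed. My expectation is that this conjugation defines an involution on $W|_{G_K}\simeq V_\ell(C')^{\oplus 4}$ whose $(\pm 1)$-eigenspaces are each two copies of $V_\ell(C')$, with $\Frob_p$ acting in the usual way on the $(+1)$-part and with eigenvalues negated on the $(-1)$-part; this yields the product $L_p(C'/\Q,T)^2 L_p(C'/\Q,-T)^2$. The main obstacle I foresee is precisely this $f=2$ analysis: making the sign change explicit from the semidirect-product structure of $G_C=\Aut(C)\rtimes\Gal(K/\Q)$ and the $D_8$- or $D_{12}$-structure of $\Aut(C)$, and verifying that the finite list of excluded values of~$u$ in~(\ref{uCMD8}) and~(\ref{uCMD12}) is exactly what is needed so that the decomposition above does not degenerate.
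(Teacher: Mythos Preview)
Your $f=1$ argument is on the right track, though the lifting from $G_L$ to $G_K$ via Schur's lemma is not complete as stated: knowing $W|_{G_L}\simeq V_\ell(C')^{\oplus 4}|_{G_L}$ and that $V_\ell(C')$ is $G_K$-simple does not by itself force $W|_{G_K}$ to be isotypic of type $V_\ell(C')$. The paper argues directly over $G_K$: since $\lambda_\phi(\Gal(L/K))\subseteq H_C$ and $\Res^{G_C}_{H_C}\theta_C\simeq g\cdot\varrho$ (Lemma~\ref{restheta}), one has $W|_{G_K}\simeq g^2\cdot(\varrho\circ\Res\lambda_\phi)\otimes V_\ell(E)$; simplicity then forces $V_\ell(C')\simeq(\varrho\circ\Res\lambda_\phi)\otimes V_\ell(E)$, whence $W|_{G_K}\simeq 4\cdot V_\ell(C')$.

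The genuine gap is the $f=2$ case. You anticipate an involution splitting $W$ into $\pm 1$-eigenspaces, each two copies of $V_\ell(C')$, but provide no mechanism for producing it or computing its signature from the semidirect-product data. The paper bypasses any such eigenspace analysis by upgrading the $G_K$-isomorphism to a $G_\Q$-isomorphism. Tensoring $V_\ell(C')\simeq(\varrho\circ\Res\lambda_\phi)\otimes V_\ell(E)$ by a multiple of $\Q[\Gal(K/\Q)]$ and invoking the induction identity $\Ind^{G_C}_{H_C}\varrho\simeq\frac{[K:\Q]}{g}\cdot\theta_C$ (Proposition~\ref{guap}, valid in cases (I) and (II)) yields
\[
(\theta_C\circ\lambda_\phi)\otimes V_\ell(C)\;\simeq\;\rho\otimes V_\ell(C')
\]
as $\Q_\ell[G_\Q]$-modules, where $\rho$ is the regular representation of $\Gal(K/\Q)$ or twice it, in either case of dimension $4$ (Corollaries~\ref{coro: D8} and~\ref{coro: D12}). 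Since $\Gal(K/\Q)$ is $C_2\times C_2$ or $C_2$, every nontrivial element acts on $\rho$ with eigenvalues $1,1,-1,-1$, and the $f=2$ formula drops out. The excluded values of $u$ are exactly the CM values of Remark~\ref{uambcm}, as you suspected; beyond ensuring $\End^0_K(J(C))\simeq\M_2(\Q)$ they play no further role.
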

In the statement of the theorem, $L_{ p}(C/\Q,\theta_C\circ \lambda_\phi,T)$ stands for the Rankin-Selberg polynomial whose roots are all the products of roots of $L_{ p}(C/\Q,T)$ and roots of $\det(1-\theta_C\circ\lambda_\phi(\Frob_{ p})T)$.

\section{The twisting representation $\theta_C$}

For any twist $ C'$ of a smooth projective curve $C$ defined over $k$ of genus~$g\geq 2$, let $K/k$ and $L/k$ be as in the Introduction. We will write the natural action of the group $\Gal(L/k)$ on $\Aut(C)$, $\Isom(C',C)$, $\End_L^0(J(C))$, and $\Hom_L^0(J(C),J(C'))$ using left exponentiation and we will often avoid writing $\circ$ for the composition of maps. Then, we have the following monomorphism of groups
$$\lambda_C\colon\Gal(K/k)\rightarrow\Aut(\Aut(C)),\qquad \lambda_C(\sigma)(\alpha)={}^{\sigma}\alpha\,.$$
Indeed, the minimality of $K$ guarantees that if $\sigma\in \Gal(K/k)$ is such that $\alpha={}^{\sigma}\alpha$ for every
$\alpha\in\Aut(C)$, then $\sigma$ is trivial. We define the twisting group of~$C$ as
$$G_C:=\Aut(C)\rtimes_{\lambda_C}\Gal(K/k)\,,$$
where $\rtimes_{\lambda_C}$ denotes the semidirect product through the morphism~$\lambda_C$. We now proceed to somehow justify the name of $G_C$. First, we fix some notation. Suppose that $F'/k$ is a Galois extension and that $F/k$ is a Galois subextension of $F'/k$. Then, let $\pi_{F'/F}\colon\Gal(F'/k)\rightarrow\Gal(F/k)$ stand for the canonical projection. For every isomorphism $\phi\colon C'\rightarrow C$, define the map
\begin{equation}\label{equation: emb}
\lambda_\phi\colon\Gal(L/k)\rightarrow G_C,\qquad \lambda_\phi(\sigma)=(\phi(^{\sigma}\phi)^{-1},\pi_{L/K}(\sigma))\,.
\end{equation}
\begin{lema}\label{fields} The map $\lambda_\phi$ is a monomorphism of groups.
\end{lema}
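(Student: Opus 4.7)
The plan is to verify three things in order: well-definedness of $\lambda_\phi$, the homomorphism property, and injectivity.

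For well-definedness, I would check that $\phi({}^{\sigma}\phi)^{-1}$ is genuinely an automorphism of~$C$. Since $C$ and $C'$ are defined over~$k$, Galois acts on $\Isom(C',C)$, so ${}^{\sigma}\phi \in \Isom(C',C)$. Composing with $\phi^{-1}$ in the appropriate order gives an element of $\Aut(C)$. Moreover, since this automorphism is defined over $L$ but $K$ is the field of definition of all of $\Aut(C)$, it is automatically defined over $K$, which is needed for the second coordinate to live in $\Gal(K/k)$ in a compatible way.

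For the homomorphism property, the key observation is that the action of $\Gal(L/k)$ on $\Aut(C)$ factors through $\pi_{L/K}$, so for any $\alpha\in \Aut(C)$ and $\sigma\in\Gal(L/k)$ we have $\lambda_C(\pi_{L/K}(\sigma))(\alpha)={}^{\sigma}\alpha$. Using the semidirect product rule $(\alpha_1,s_1)(\alpha_2,s_2)=(\alpha_1\cdot {}^{s_1}\alpha_2, s_1s_2)$, the computation
\begin{equation*}
\lambda_\phi(\sigma)\lambda_\phi(\tau)=\bigl(\phi({}^{\sigma}\phi)^{-1}\cdot{}^{\sigma}(\phi({}^{\tau}\phi)^{-1}),\pi_{L/K}(\sigma\tau)\bigr)=\bigl(\phi({}^{\sigma}\phi)^{-1}({}^{\sigma}\phi)({}^{\sigma\tau}\phi)^{-1},\pi_{L/K}(\sigma\tau)\bigr)
\end{equation*}
collapses telescopically to $\lambda_\phi(\sigma\tau)$.

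For injectivity, suppose $\lambda_\phi(\sigma)=(\id_C,1)$. The second coordinate tells us $\sigma\in\Gal(L/K)$, so $\sigma$ fixes every element of $\Aut(C)$ pointwise; the first coordinate gives ${}^{\sigma}\phi=\phi$. The crucial remark is that every element of $\Isom(C',C)$ has the form $\alpha\circ\phi$ for a unique $\alpha\in\Aut(C)$, so $\sigma$ fixes all of $\Isom(C',C)$. By the defining minimality of~$L$, this forces $\sigma=1$.

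The only delicate point is the homomorphism verification, where one must be careful that the semidirect-product twist by $\lambda_C(\pi_{L/K}(\sigma))$ correctly reproduces the Galois action ${}^{\sigma}(-)$ on the automorphism $\phi({}^{\tau}\phi)^{-1}\in\Aut(C)$; everything else is essentially a direct consequence of the minimality of the fields $K$ and $L$ stated in the introduction.
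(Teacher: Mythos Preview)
Your proposal is correct and follows essentially the same approach as the paper: the homomorphism check is the identical telescoping computation (just written starting from $\lambda_\phi(\sigma)\lambda_\phi(\tau)$ rather than from $\lambda_\phi(\sigma\tau)$), and the injectivity argument is the same one, namely that $\sigma\in\Gal(L/K)$ together with ${}^{\sigma}\phi=\phi$ forces $\sigma$ to fix every $\alpha\phi\in\Isom(C',C)$, whence $\sigma=1$ by minimality of~$L$. Your added well-definedness remark is a small gloss the paper omits but is entirely appropriate.
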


\begin{proof} Let $\sigma$ and $\tau$ belong to $\Gal(L/k)$. Then, we have
$$
\begin{array}{l@{\,=\,}l}
\lambda_\phi(\sigma\tau) & \displaystyle{(\phi(^{\sigma\tau}\phi)^{-1},\pi_{L/K}(\sigma\tau))}\\[6pt]
 & \displaystyle{(\phi(^{\sigma}\phi)^{-1}\circ{}^{\sigma}(\phi(^{\tau}\phi)^{-1}), \pi_{L/K}(\sigma\tau))}\\[6pt]
 & \displaystyle{(\phi(^{\sigma}\phi)^{-1}\lambda_C(\pi_{L/K}(\sigma))(\phi(^{\tau}\phi)^{-1}), \pi_{L/K}(\sigma)\circ\pi_{L/K}(\tau))}\\[6pt]
 & \displaystyle{(\phi(^{\sigma}\phi)^{-1}, \pi_{L/K}(\sigma))(\phi(^{\tau}\phi)^{-1}, \pi_{L/K}(\tau))=\lambda_\phi(\sigma)\circ\lambda_\phi(\tau)}\,.\\[6pt]
\end{array}
$$
Let $\sigma\in\Gal(L/k)$ be such that $\phi(^{\sigma}\phi)^{-1}=\id$ and $\pi_{L/K}({\sigma})$ is trivial, i.e., $\phi={}^{\sigma}\phi$ and $\sigma\in\Gal(L/K)$. Let $\psi$ be any element of $\Isom(C',C)$. Since $\psi\phi^{-1}$ is an element of $\Aut(C)$, it is fixed by $\sigma$. Then, one has
$${}^{\sigma}\psi={}^{\sigma}(\psi\phi^{-1}\phi)={}^{\sigma}(\psi\phi^{-1}){}^{\sigma}\phi=\psi\phi^{-1}\phi=\psi\,.$$
The minimality of $L$ guarantees now that $\sigma$ is trivial.
\end{proof}

\begin{prop} A one-to-one correspondence between the elements of the following sets:
\begin{enumerate}[i)]
\item The set $\Twist(C/k)$ of twists of $C$ up to $k$-isomorphism;
\item The set of monomorphisms $\lambda\colon \Gal(F/k)\rightarrow G_C$ of the form $\lambda=\xi\rtimes_{\lambda_C}\pi_{F/K}$, with $\xi$ a map from $\Gal(F/k)$ to $\Aut(C)$, where we identify
$$\lambda_1\colon\Gal(F_1/k)\rightarrow G_C\qquad\text{and}\qquad\lambda_2\colon\Gal(F_2/k)\rightarrow G_C$$
if there exists $\alpha\in\Aut(C)$ such that, for every $\sigma\in \Gal(F_1F_2/k)$, one has $$\lambda_1\circ\pi_{F_1F_2/F_1}(\sigma)(\alpha,1)=(\alpha,1)\lambda_2\circ\pi_{F_1F_2/F_2}(\sigma)\,;$$
\end{enumerate}
is given by associating to a twist $C'$ of $ C$ the class of the monomorphism~$\lambda_\phi$, where~$\phi$ is any isomorphism from $C$ to $C'$.
\end{prop}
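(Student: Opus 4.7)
The plan is to view this proposition as a reformulation, in the language of the twisting group $G_C$, of the classical bijection between $\Twist(C/k)$ and the pointed set $H^1(\Gal(\overline \Q/k),\Aut(C))$. The multiplicativity of a map $\lambda=\xi\rtimes_{\lambda_C}\pi_{F/K}$ unpacks inside the semidirect product as the $1$-cocycle condition $\xi(\sigma\tau)=\xi(\sigma)\cdot{}^{\sigma}\xi(\tau)$; and a direct computation in $G_C$ shows that the equivalence relation in the statement, applied at $\sigma\in\Gal(F_1F_2/k)$, translates to the coboundary identity $\xi_1(\sigma)=\alpha\cdot\xi_2(\sigma)\cdot{}^{\sigma}\alpha^{-1}$. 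The proposition then becomes the classical twist classification once this dictionary is made explicit.

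First I would verify that the assignment $C'\mapsto[\lambda_\phi]$ is well defined. Lemma~\ref{fields} already ensures that $\lambda_\phi$ is a monomorphism of the prescribed shape. Replacing $\phi$ by $\alpha\phi$ for some $\alpha\in\Aut(C)$ yields $\xi_{\alpha\phi}(\sigma)=\alpha\cdot\xi_\phi(\sigma)\cdot{}^{\sigma}\alpha^{-1}$, so $\lambda_{\alpha\phi}$ and $\lambda_\phi$ represent the same class via $\alpha$. Moreover, if $\mu\colon C''\to C'$ is a $k$-isomorphism, then ${}^{\sigma}\mu=\mu$, which forces the cocycle and the minimal field~$L$ attached to $\phi$ and to $\phi\mu$ to coincide, so the class depends only on the $k$-isomorphism type of $C'$.

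For injectivity, suppose $\lambda_{\phi_1}$ and $\lambda_{\phi_2}$ are equivalent via some $\alpha\in\Aut(C)$, where $\phi_i\colon C'_i\to C$. Setting $\psi:=\alpha^{-1}\phi_1$, the coboundary identity gives $\xi_\psi=\xi_{\phi_2}$ on $\Gal(L_1L_2/k)$; using the identity ${}^{\sigma}\phi=\xi_\phi(\sigma)^{-1}\phi$, one then checks that $\phi_2^{-1}\psi\colon C'_1\to C'_2$ is Galois-invariant, hence a $k$-isomorphism, so $C'_1$ and $C'_2$ represent the same twist.

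For surjectivity, starting from a monomorphism $\lambda=\xi\rtimes_{\lambda_C}\pi_{F/K}$, the cocycle class $[\xi]\in H^1(\Gal(F/k),\Aut(C))$ furnished by the dictionary determines, via Weil's descent theorem on twists, a pair $(C',\phi)$ with $\phi\colon C'\to C$ defined over $F$ and $\phi\cdot{}^{\sigma}\phi^{-1}=\xi(\sigma)$. The minimal field~$L$ of definition of $\Isom(C',C)$ is then automatically contained in~$F$, and the injectivity of $\lambda$ rules out any nontrivial factorization through $\Gal(L/k)$, forcing $L=F$ and $\lambda=\lambda_\phi$. The main obstacle is not mathematical but notational: one has to carefully match the semidirect-product bookkeeping inside $G_C$ with the classical cocycle formalism; the only substantive external input is Weil's descent theorem used in the surjectivity step.
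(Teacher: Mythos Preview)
Your proposal is correct and follows essentially the same route as the paper: both reduce the statement to the classical bijection $\Twist(C/k)\leftrightarrow H^1(G_k,\Aut(C))$ by showing that the semidirect-product multiplication in $G_C$ encodes the cocycle condition and that the equivalence relation on monomorphisms encodes the coboundary relation. The paper handles this more tersely, packaging well-definedness, injectivity, and surjectivity into a single dictionary argument rather than checking them separately, and defining $F$ directly as the fixed field of $\ker\tilde\lambda$ rather than invoking Weil descent and then arguing $L=F$ a posteriori; but the content is the same.
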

\begin{proof} There is a well-known bijection between the elements of $\Twist(C/k)$ and the elements of the cohomology set $H^1(G_k,\Aut(C))$, given by associating to a twist $ C'$ of $C$ the class of the cocycle $\xi(\sigma)=\phi(^{\sigma}\phi)^{-1}$ (see \cite{Sil86}, chapter~X). Now, associate to the cocycle $\xi$, the morphism $\tilde \lambda\colon G_k\rightarrow G_C$, defined by $\tilde\lambda=\xi\rtimes_{\lambda_C}\pi_{\overline k/K}$. Observe that, for $\sigma,\,\tau$ in $G_k$, one has that $\tilde\lambda(\sigma\tau)=\tilde\lambda(\sigma)\tilde\lambda(\tau)$ if and only if
$\xi(\sigma\tau)=\xi(\sigma)\circ{}^{\sigma}\xi(\tau)$. Let $G_F$ denote the kernel of $\tilde\lambda$ and let $\lambda\colon \Gal(F/k)\rightarrow G_C$ satisfy $\tilde\lambda=\lambda\circ\pi_{\overline k/F}$. Then $\lambda$ is injective. Moreover, the cocycles $\xi_1$ and $\xi_2$ are cohomologous if and only if there exists $\alpha$ in $\Aut(C)$ such that for all $\sigma$ in $G_k$ it holds $\xi_1(\sigma)\circ{}^{\sigma}\alpha=\alpha\circ\xi_2(\sigma)$, which is equivalent to  $\tilde\lambda_1(\sigma)(\alpha,1)=(\alpha,1)\tilde\lambda_2(\sigma)$. Finally, this amounts to ask that $\lambda_1\circ\pi_{F_1F_2/F_1}(\sigma)(\alpha,1)=(\alpha,1)\lambda_2\circ\pi_{F_1F_2/F_2}(\sigma)$ for every $\sigma\in \Gal(F_1F_2/k)$.
\end{proof}

\begin{prop} The monomorphism $\lambda_\phi$ is an isomorphism if and only if the action of $\Gal(L/K)$ on $\Isom(C',C)$ has a single orbit.
\end{prop}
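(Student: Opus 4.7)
The plan is a counting argument that reduces both sides of the equivalence to the equality $|\Gal(L/K)|=|\Aut(C)|$.

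First I would exploit the semidirect product structure. Projecting $G_C=\Aut(C)\rtimes_{\lambda_C}\Gal(K/k)$ onto its second factor and composing with $\lambda_\phi$ gives $\pi_{L/K}$, which is already surjective. Hence $\lambda_\phi$ is surjective if and only if its image meets every element of the normal subgroup $\Aut(C)\times\{1\}$, i.e.\ if and only if the restriction
$$
\lambda_\phi|_{\Gal(L/K)}\colon \Gal(L/K)\longrightarrow \Aut(C),\qquad \sigma\mapsto \phi({}^{\sigma}\phi)^{-1},
$$
is surjective (note that for $\sigma\in\Gal(L/K)$ one has $\pi_{L/K}(\sigma)=1$). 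Since $\Isom(C',C)=\Aut(C)\cdot\phi$ as a principal homogeneous space for $\Aut(C)$, we have $|\Isom(C',C)|=|\Aut(C)|$.

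Next I would verify that $\Gal(L/K)$ acts \emph{freely} on $\Isom(C',C)$, imitating the minimality argument from Lemma~\ref{fields}. If $\sigma\in\Gal(L/K)$ satisfies ${}^{\sigma}\phi=\phi$, then for any $\psi\in\Isom(C',C)$ we write $\psi=(\psi\phi^{-1})\circ\phi$ with $\psi\phi^{-1}\in\Aut(C)$; since $\sigma\in\Gal(L/K)$ fixes $\Aut(C)$ pointwise, we get ${}^{\sigma}\psi=\psi$, and then the minimality of $L$ forces $\sigma=1$. In particular, the orbit of $\phi$ under $\Gal(L/K)$ has cardinality exactly $|\Gal(L/K)|$, and the map $\sigma\mapsto\phi({}^{\sigma}\phi)^{-1}$ is injective.

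Finally, I would put the two counts together. The action having a single orbit on $\Isom(C',C)$ is equivalent, by freeness, to
$$
|\Gal(L/K)|=|\Isom(C',C)|=|\Aut(C)|,
$$
which by the injectivity just noted is equivalent to the surjectivity of the restriction $\lambda_\phi|_{\Gal(L/K)}$, i.e.\ to $\lambda_\phi$ being an isomorphism.

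The only step with any substance is the freeness of the action on $\Isom(C',C)$; it is the analogue for the normal subgroup $\Gal(L/K)$ of the argument already used in Lemma~\ref{fields} to show that $\lambda_\phi$ is injective, so no new idea is required.
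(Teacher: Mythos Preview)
Your proof is correct and follows essentially the same route as the paper: both reduce the question to whether the restricted map $\Gal(L/K)\to\Aut(C)$, $\sigma\mapsto\phi({}^{\sigma}\phi)^{-1}$, is bijective, use the minimality-of-$L$ argument from Lemma~\ref{fields} for injectivity, and identify surjectivity with the orbit of $\phi$ being all of $\Isom(C',C)$. Your write-up is slightly more explicit about why surjectivity of $\lambda_\phi$ reduces to surjectivity of this restriction, but there is no genuine difference in strategy.
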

\begin{proof} One has that $\lambda_\phi$ is exhaustive if and only if $|\!\Aut(C)|=|\Gal(L/K)|$. This is equivalent to the fact that the injective morphism
$$\lambda\colon\Gal(L/K)\rightarrow \Aut(C),\,\qquad \lambda(\sigma)=\phi(^\sigma\phi)^{-1}$$
is an isomorphism. This happens if and only if for every $\alpha \in\Aut(C)$ there exists $\sigma \in\Gal(L/K)$ such that $\alpha\phi={}^\sigma\phi$, that is, if and only if for every $\psi\in\Isom(C',C)$, there exists $\sigma \in\Gal(L/K)$ such that $\psi={}^\sigma\phi$.
\end{proof}

\begin{rem}\label{remark: Fit10} For any twist $C'$ of $C$, the abelian varieties $J(C)$ and $J(C')$ are defined over~$k$ and are isogenous over $L$. Let $F/k$ be a subextension of $L/k$. Denote by $\theta(C,C';L/F)$ the representation afforded by the $\Q[\Gal(L/F)]$-module $\Hom_L^0(J(C),J(C'))$. We will write $\theta(C,C'):=\theta(C,C';L/k)$. We recall that Theorem 3.1 of \cite{Fit10} asserts that
$$
V_\ell(C')\subseteq \theta(C,C')\otimes V_\ell(C)
$$
as $\Q_\ell[G_k]$-modules.
\end{rem}

Every isomorphism $\phi$ from $C'$ to $C$ induces an isomorphism from $J(C')$ to $J(C)$, that we will also call $\phi$. Consider the map
$$\theta_\phi\colon \Gal(L/k)\rightarrow \Aut_\Q(\End_L^0(J(C)))\,,\qquad \theta_\phi(\sigma)(\psi)=\phi(^\sigma\phi)^{-1}\circ{}^\sigma\psi\,,$$
where $\sigma$ is in $\Gal(L/k)$ and $\psi$ in $\End_L^0(J(C))$.

\begin{prop}\label{tetator} For every isomorphism $\phi\colon C'\rightarrow C$, the map $\theta_\phi$ is a rational representation of $\Gal(L/k)$ isomorphic to $\theta(C,C')$.
\end{prop}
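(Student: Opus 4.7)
The plan is to verify that $\theta_\phi$ is a homomorphism, then exhibit an explicit $\Gal(L/k)$-equivariant $\Q$-linear isomorphism between $\End_L^0(J(C))$ (with the $\theta_\phi$-action) and $\Hom_L^0(J(C),J(C'))$ (with the action defining $\theta(C,C')$).

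First I would check that $\theta_\phi$ lands in $\Aut_\Q(\End_L^0(J(C)))$: for fixed $\sigma$, the map $\psi\mapsto\phi({}^\sigma\phi)^{-1}\circ{}^\sigma\psi$ is $\Q$-linear in $\psi$ since ${}^\sigma(\cdot)$ is $\Q$-linear and left composition by the $\Q$-linear automorphism $\phi({}^\sigma\phi)^{-1}$ of $J(C)$ is too. Next I would verify the homomorphism property. For $\sigma,\tau\in\Gal(L/k)$ and $\psi\in\End_L^0(J(C))$, the computation
\[
\theta_\phi(\sigma)\bigl(\theta_\phi(\tau)(\psi)\bigr)=\phi({}^\sigma\phi)^{-1}\circ{}^\sigma\bigl(\phi({}^\tau\phi)^{-1}\circ{}^\tau\psi\bigr)=\phi({}^\sigma\phi)^{-1}\circ{}^\sigma\phi\circ({}^{\sigma\tau}\phi)^{-1}\circ{}^{\sigma\tau}\psi=\phi({}^{\sigma\tau}\phi)^{-1}\circ{}^{\sigma\tau}\psi=\theta_\phi(\sigma\tau)(\psi)
\]
reduces to the cocycle identity for $\xi(\sigma)=\phi({}^\sigma\phi)^{-1}$ already invoked in the previous proposition.

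Now I would construct the isomorphism with $\theta(C,C')$. Using the isogeny $\phi\colon J(C')\to J(C)$ induced by the curve isomorphism, define
\[
\Phi\colon\End_L^0(J(C))\longrightarrow\Hom_L^0(J(C),J(C'))\,,\qquad\Phi(\psi)=\phi^{-1}\circ\psi\,.
\]
Since $\phi$ is an isomorphism defined over $L$, left composition with $\phi^{-1}$ gives a $\Q$-linear bijection between the two $\Q$-vector spaces (with inverse $f\mapsto\phi\circ f$). It remains to check that $\Phi$ intertwines the two actions, that is, $\Phi(\theta_\phi(\sigma)(\psi))={}^\sigma\Phi(\psi)$ for every $\sigma\in\Gal(L/k)$. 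This is a direct calculation:
\[
\Phi\bigl(\theta_\phi(\sigma)(\psi)\bigr)=\phi^{-1}\circ\phi({}^\sigma\phi)^{-1}\circ{}^\sigma\psi=({}^\sigma\phi)^{-1}\circ{}^\sigma\psi={}^\sigma(\phi^{-1}\circ\psi)={}^\sigma\Phi(\psi)\,,
\]
so $\Phi$ is $\Gal(L/k)$-equivariant.

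Because both the source and target of $\Phi$ are finite-dimensional $\Q$-vector spaces and the action on the target is the rational representation $\theta(C,C')$ of Remark~\ref{remark: Fit10}, it follows that $\theta_\phi$ is itself a rational representation of $\Gal(L/k)$, isomorphic to $\theta(C,C')$ via $\Phi$. The only point requiring care is bookkeeping of which maps are defined over $k$ versus over $L$; in particular $\phi$ (and hence $\phi^{-1}$) is only defined over $L$, but this is harmless because both modules are considered with their natural $\Gal(L/k)$-action, and the identity $\phi^{-1}\circ\phi({}^\sigma\phi)^{-1}=({}^\sigma\phi)^{-1}$ is exactly what converts the built-in cocycle twist of $\theta_\phi$ into the naked Galois action on $\Hom_L^0(J(C),J(C'))$.
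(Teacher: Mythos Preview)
Your proof is correct and follows essentially the same approach as the paper: verify the homomorphism property via the cocycle identity for $\sigma\mapsto\phi({}^\sigma\phi)^{-1}$, then exhibit an explicit $\Q$-linear intertwiner between the two modules. The only cosmetic difference is that the paper uses the map $\tilde\phi\colon\Hom_L^0(J(C),J(C'))\to\End_L^0(J(C))$, $\varphi\mapsto\phi\circ\varphi$, whereas you use its inverse $\Phi(\psi)=\phi^{-1}\circ\psi$; the equivariance checks are the same computation read in opposite directions.
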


\begin{proof}
It is indeed a representation. For $\sigma$ and $\tau$ in $\Gal(L/k)$, one has
$$
\begin{array}{l@{\,=\,}l}
\theta_\phi(\sigma\tau)(\psi) & \displaystyle{\phi(^{\sigma\tau}\phi)^{-1}\circ{}^{\sigma\tau}\psi}\\[6pt]
 & \displaystyle{\phi(\phi^{\sigma})^{-1}\circ{}^{\sigma}(\phi({}^{\tau}\phi)^{-1}\circ{}^{\tau}\psi)}\\[6pt]
 & \displaystyle{(\theta_\phi(\sigma)\circ\theta_\phi(\tau))(\psi)\,.}
\end{array}
$$
The map $\tilde\phi\colon \Hom_L^0(J(C),J(C'))\rightarrow \End_L^0(J(C))$, defined by $\tilde\phi(\varphi)=\phi\circ\varphi$ for $\varphi\in\Hom_L^0(J(C),J(C'))$ is an isomorphism of $\Q$-vector spaces. Now, one deduces that $\theta(C,C')$ and $\theta_\phi$ are isomorphic from the fact that, for every $\sigma$ in $\Gal(L/k)$, the following diagram is commutative
$$
\xymatrix{
 \Hom_L^0(J(C),J(C'))\ar[d]_{\tilde\phi} \ar[rr]^{\theta(C,C')(\sigma)} && \Hom_L^0(J(C),J(C'))\ar[d]^{\tilde\phi}\\
\End_L^0(J(C)) \ar[rr]^{\theta_\phi(\sigma)} &&\End_L^0(J(C))\,.}
$$
\end{proof}

Denote also by $\alpha$ the induced endomorphism in $J(C)$ by an automorphism $\alpha$ in $\Aut(C)$. We define the twisting representation of the $L$-function of $C$ as the map
\begin{equation}\label{equation: twist rep}
\theta_C\colon G_C\rightarrow\Aut_\Q(\End_K^0(J(C)))\,,\qquad\theta_C((\alpha,\sigma))(\psi)=\alpha\circ{}^{\sigma}\psi\,,
\end{equation}
where $\sigma$ in $\Gal(K/k)$ and $\psi$ in $\End_K^0(J(C))$.

\begin{defi}\label{defi: theta_C twist} We will say that a twist~$C'$ of~$C$ is a $\theta_C$-twist of $C$ if $L$ is such that $\End_K^0(J(C))=\End_L^0(J(C))$.
\end{defi}

\begin{teo}\label{factorteta} The map $\theta_C$ is a faithful representation of $G_C$. Moreover, for every $\theta_C$-twist $C'$ of $C$ and every isomorphism $\phi\colon C'\rightarrow C$, one has $\theta_C\circ\lambda_\phi=\theta_\phi$, that is, the following diagram is commutative
$$
\xymatrix{
 \Gal(L/k)\ar[rd]_{\theta_\phi} \ar@{^{(}->}[r]^{\lambda_\phi} & G_C\ar[d]^{\theta_C}\\
 &\Aut_\Q(\End_K^0(J(C)))\,.}
$$
\end{teo}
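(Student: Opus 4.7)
The plan is to break the theorem into three independent checks: (i) $\theta_C$ is a group homomorphism, (ii) its kernel is trivial, and (iii) $\theta_C\circ\lambda_\phi=\theta_\phi$.

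For (i), I would just unravel the semidirect-product multiplication $(\alpha,\sigma)(\beta,\tau)=(\alpha\cdot{}^{\sigma}\beta,\sigma\tau)$ and evaluate both $\theta_C((\alpha,\sigma)(\beta,\tau))$ and $\theta_C(\alpha,\sigma)\circ\theta_C(\beta,\tau)$ on a general $\psi\in\End_K^0(J(C))$. Both reduce to $\alpha\circ{}^{\sigma}\beta\circ{}^{\sigma\tau}\psi$, so $\theta_C$ is indeed a homomorphism.

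For (ii), suppose $\theta_C((\alpha,\sigma))=\id$. Testing at $\psi=\id_{J(C)}$ immediately yields $\alpha=\id$. The remaining identity ${}^{\sigma}\psi=\psi$ for every $\psi\in\End_K^0(J(C))$ can then be restricted to the Galois-equivariant embedding $\Aut(C)\hookrightarrow\End_K^0(J(C))$ coming from the functoriality of the Jacobian, yielding ${}^{\sigma}\alpha'=\alpha'$ for every $\alpha'\in\Aut(C)$. The minimality of $K$ — the very property used in the definition of $\lambda_C$ to conclude that $\lambda_C$ is a monomorphism — now forces $\sigma=1$.

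For (iii), writing $\bar\sigma:=\pi_{L/K}(\sigma)$ and unwinding the definitions,
$$\theta_C(\lambda_\phi(\sigma))(\psi)=\theta_C\bigl(\phi({}^{\sigma}\phi)^{-1},\bar\sigma\bigr)(\psi)=\phi({}^{\sigma}\phi)^{-1}\circ{}^{\bar\sigma}\psi,$$
while $\theta_\phi(\sigma)(\psi)=\phi({}^{\sigma}\phi)^{-1}\circ{}^{\sigma}\psi$. These coincide because $\psi$, being defined over $K$, is acted on by $\sigma\in\Gal(L/k)$ through its image $\bar\sigma\in\Gal(K/k)$. The $\theta_C$-twist hypothesis $\End_K^0(J(C))=\End_L^0(J(C))$ is exactly what is needed here: it ensures that $\theta_\phi$ — originally a representation on $\End_L^0(J(C))$ — has the same domain $\End_K^0(J(C))$ as $\theta_C\circ\lambda_\phi$, so that the comparison is even meaningful.

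The only conceptual subtlety is recognising that the $\theta_C$-twist condition is precisely what makes the diagram well-posed, and that faithfulness genuinely uses the minimality of $K$ through the embedding $\Aut(C)\hookrightarrow\End_K^0(J(C))$; beyond this, every step reduces to a formal manipulation of the defining formulas for $\lambda_\phi$, $\theta_C$, and $\theta_\phi$.
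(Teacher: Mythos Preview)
Your proof is correct and follows essentially the same approach as the paper's own proof: a direct verification that $\theta_C$ is a homomorphism, a faithfulness argument via a well-chosen test element followed by the minimality of $K$, and an unwinding of definitions for the factorisation $\theta_C\circ\lambda_\phi=\theta_\phi$. The only cosmetic differences are that the paper tests at $\psi=\alpha$ (obtaining ${}^{\sigma}\alpha=\id$, hence $\alpha=\id$) whereas you test at $\psi=\id_{J(C)}$, and that you make explicit both the role of the Galois-equivariant embedding $\Aut(C)\hookrightarrow\End_K^0(J(C))$ and the reason the $\theta_C$-twist hypothesis is needed---points the paper leaves implicit.
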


\begin{proof}
For $\psi_1,\,\psi_2$ in $\Aut(C)$ and $\sigma_1,\,\sigma_2$ in $\Gal(K/k)$, one has
$$
\begin{array}{l@{\,=\,}l}
\theta_C((\alpha_1,\sigma_1)(\alpha_2,\sigma_2))(\psi) & \displaystyle{\theta_C((\alpha_1\circ{}^{\sigma_1}\alpha_2,\sigma_1\sigma_2))(\psi) }\\[6pt]
 & \displaystyle{\alpha_1\circ{}^{\sigma_1}\alpha_2\circ^{\sigma_1\sigma_2}\psi}\\[6pt]
 & \displaystyle{\alpha_1\circ{}^{\sigma_1}(\alpha_2\circ^{\sigma_2}\psi)}\\[6pt]
 & \displaystyle{(\theta_C((\alpha_1,\sigma_1))\circ\theta_C((\alpha_2,\sigma_2)))(\psi)\,.}\\[6pt]
\end{array}
$$
Let $\alpha$ in $\Aut(C)$ and $\sigma$ in $\Gal(K/k)$ be such that $\theta_C(\alpha,\sigma)(\psi)=\psi$ for every $\psi$ in $\End_K^0(J(C))$. In particular, for $\psi=\alpha$, one obtains that ${}^\sigma\alpha=\id$, which implies $\alpha=\id$. Then $\psi={}^\sigma\psi$ for all $\psi$ in $\End_K^0(J(C))$ and the minimality of $K$ implies that $\sigma$ is trivial.
Finally, it holds
$$(\theta_C\circ\lambda_\phi)(\sigma)(\psi)=\theta_C(\phi({}^\sigma \phi)^{-1},\pi_{L/K}(\sigma))(\psi)=\phi({}^{\sigma}\phi)^{-1}\circ{}^{\sigma}\psi=\theta_\phi(\sigma)(\psi)\,,$$
for $\sigma$ in $\Gal(L/k)$ and $\psi$ in $\End_L^0(J(C))$.
\end{proof}

As a corollary of the previous results one obtains the desired inclusion
\begin{equation}\label{equation: prin}
V_\ell(C')\subseteq (\theta_C\circ\lambda_\phi)\otimes V_\ell(C)
\end{equation}
for every $\theta_C$-twist $C'$ of $C$. This inclusion is a generalization of the identity (\ref{tramp}).

\begin{prop}\label{gengen} If $C'$ is a nontrivial twist of $C$ such that $\End_L^0(J(C))\simeq\Q$, then the extension $L/k$ is quadratic, the representation $\theta_C\circ \lambda_\phi$ is the quadratic character of $\Gal(L/k)$, and one has
$V_\ell(C')\simeq (\theta_C\circ\lambda_\phi)\otimes V_\ell(C)$.
\end{prop}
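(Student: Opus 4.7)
The plan is to leverage the hypothesis $\End_L^0(J(C))\simeq\Q$ to cut the codomain of $\theta_C$ down to a copy of $\Q^*$. Since $K\subseteq L$ and $\Q\subseteq\End_K^0(J(C))\subseteq\End_L^0(J(C))\simeq\Q$, both algebras coincide with $\Q$; in particular $C'$ is automatically a $\theta_C$-twist in the sense of Definition~\ref{defi: theta_C twist}, so Theorem~\ref{factorteta} and the inclusion~(\ref{equation: prin}) are available. I would identify $\Aut_\Q(\Q)$ with $\Q^*$ via the effect on $1\in\Q$.

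Next I would exploit faithfulness. By Theorem~\ref{factorteta}, $\theta_C$ embeds the finite group $G_C$ into $\Q^*$, whose only nontrivial finite subgroup is $\{\pm 1\}$. Composing with the monomorphism $\lambda_\phi\colon\Gal(L/k)\hookrightarrow G_C$ from Lemma~\ref{fields}, I obtain an injective character $\theta_C\circ\lambda_\phi\colon\Gal(L/k)\hookrightarrow\{\pm 1\}$. Nontriviality of the twist~$C'$ forces $L\ne k$, so $[L:k]\ge 2$, and combined with the previous injection this pins down $[L:k]=2$ and identifies $\theta_C\circ\lambda_\phi$ with the unique nontrivial (hence quadratic) character of $\Gal(L/k)$.

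For the final isomorphism I would start from the inclusion
$$V_\ell(C')\subseteq (\theta_C\circ\lambda_\phi)\otimes V_\ell(C)$$
of $\Q_\ell[G_k]$-modules provided by~(\ref{equation: prin}). Since $\theta_C\circ\lambda_\phi$ is one-dimensional, both sides have $\Q_\ell$-dimension $2g$, so the inclusion becomes an equality by a dimension count. The whole argument is essentially formal once Theorem~\ref{factorteta} and Lemma~\ref{fields} are in hand; if there is any real obstacle, it lies only in spotting the right identification $\Aut_\Q(\Q)\cong\Q^*$ and in invoking the fact that the torsion in $\Q^*$ is just $\{\pm 1\}$.
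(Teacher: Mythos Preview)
Your argument is correct and follows essentially the same strategy as the paper's: both use the hypothesis $\End_L^0(J(C))\simeq\Q$ to force the relevant faithful representation into $\Q^*$, whose torsion is $\{\pm1\}$, and then combine injectivity of $\lambda_\phi$ with nontriviality of the twist and a dimension count. The only cosmetic difference is that the paper separately observes $\Aut(C)\hookrightarrow C_2$ and $K=k$ before bounding $\Gal(L/k)$, whereas you bound $G_C$ in one step via the faithfulness of $\theta_C$; the content is the same.
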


\begin{proof} By the inclusion (\ref{equation: prin}), it is enough to prove that $L/k$ is quadratic and that $\theta(C,C')$ is the quadratic character of $L/k$. Since $\Aut(C)$ injects in $\End_L^0(J(C))=\End_k^0(J(C))\simeq\Q$, we have that $\Aut(C)$ injects in $C_2$ and that $K=k$. Since $C'$ is nontrivial, then $\Aut(C)$ is nontrivial and, by Lemma \ref{fields}, we deduce that $L/k$ is a quadratic extension. Since the $1$-dimensional representation $\theta(C,C')$ is faithful, it corresponds to the quadratic character of $\Gal(L/k)$.
\end{proof}

\section{The completely splitted Jacobian case}

In this section we explore the twisting representation $\theta_C$ when the Jacobian $J(C)$ splits over $K$ as the power $E^g$ of an elliptic curve $E$ defined over~$K$ without complex multiplication (CM). Note that in this case $\dim\theta_C=g^2$. We will use the notation $H_C=\Aut(C)$ when we see $\Aut(C)$ as a subgroup of the twisting group~$G_C$. For future use, we will be interested in the following cases:
\begin{itemize}
\item[(I)]  $[K\colon k]=g^2$, the elliptic curve $E$ does not have CM, and $\theta_C$ is absolutely irreducible.

\item[(II)] $[K\colon k]= g^2/2$, the elliptic curve $E$ does not have CM, and $\theta_C\simeq_{\overline \Q} \theta_1\oplus \theta_2$ for $\theta_1$ and $\theta_2$ absolutely irreducible non-isomorphic representations such that $\Res^{G_C}_{H_C}\theta_1=\Res^{G_C}_{H_C}\theta_2$.
\end{itemize}

\begin{lema}\label{restheta} Suppose that $J(C)\sim_K E^g$, for $E$ an elliptic curve defined over $K$ without CM. One has:
$$\Res^{G_C}_{H_C}\theta_C\simeq g\cdot\varrho\,,$$
where $\varrho$ is a rational representation of $H_C$ of dimension $g$.
\end{lema}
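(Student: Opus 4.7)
The plan is to identify $\End_K^0(J(C))$ as a concrete matrix algebra and then recognize the restriction of $\theta_C$ to $H_C$ as the natural left-multiplication action. Since $J(C)\sim_K E^g$ and $E$ has no CM, one has $\End_K^0(E)\simeq\Q$, and therefore $\End_K^0(J(C))\simeq M_g(\Q)$ as $\Q$-algebras. Fix once and for all an isogeny implementing this identification, so that the natural embedding $\Aut(C)\hookrightarrow\End_K^0(J(C))^\times$ realizes $H_C$ as a finite subgroup of $\GL_g(\Q)\subset M_g(\Q)$.

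Next, inspect the definition of $\theta_C$ in (\ref{equation: twist rep}). For an element of the form $(\alpha,1)\in H_C$ and any $\psi\in\End_K^0(J(C))$, one has $\theta_C((\alpha,1))(\psi)=\alpha\circ\psi$. Hence $\Res^{G_C}_{H_C}\theta_C$ is nothing but the representation on $M_g(\Q)$ afforded by left multiplication through the embedding $H_C\hookrightarrow\GL_g(\Q)$ obtained in the previous step.

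The final step is the well-known decomposition of $M_g(\Q)$ as a left module over itself: one splits a matrix into its $g$ columns to obtain an isomorphism $M_g(\Q)\simeq (\Q^g)^{\oplus g}$ of left $M_g(\Q)$-modules, with the standard matrix action on each summand $\Q^g$. Letting $\varrho$ denote the rational $g$-dimensional representation of $H_C$ given by the composition $\Aut(C)\hookrightarrow\GL_g(\Q)$ acting on column vectors, one concludes that $\Res^{G_C}_{H_C}\theta_C\simeq g\cdot\varrho$, as claimed.

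No serious obstacle is expected; the only mildly delicate point is that the isomorphism $\End_K^0(J(C))\simeq M_g(\Q)$ depends on a choice of isogeny $J(C)\sim_K E^g$, but any two such choices differ by an inner automorphism of $M_g(\Q)$, so the resulting $g$-dimensional representation $\varrho$ is well defined up to isomorphism, which is all the statement requires.
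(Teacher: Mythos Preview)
Your proof is correct and follows essentially the same approach as the paper's: the paper's decomposition $\End_K^0(E^g)\simeq\bigoplus_{i=1}^g\Hom_K^0(E,E^g)$ via $\varphi\mapsto(\varphi\circ\iota_1,\dots,\varphi\circ\iota_g)$ is precisely your column decomposition $M_g(\Q)\simeq(\Q^g)^{\oplus g}$ written in geometric rather than matrix language, and the $H_C$-representation $\varrho$ on $\Hom_K^0(E,E^g)\simeq\Q^g$ is the same in both. Your remark on the harmless dependence on the choice of isogeny is a nice addition that the paper leaves implicit.
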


\begin{proof} Consider the isomorphism
$$\Phi\colon\End_K^0(J(C))\simeq\End_K^0(E^g)\rightarrow\bigoplus_{i=1}^g\Hom_K^0(E,E^g)\,,$$
defined by $\Phi(\varphi)=(\varphi\circ\iota_1,\dots,\varphi\circ\iota_g)$, where $\iota_i\colon E\rightarrow E^g$ is the inclusion of $E$ to the $i$-th component of $E^g$. The action of $H_C=\Aut(C)$, which is by right composition, clearly restricts to each $\Hom_K^0(E,E^g)$. The rational representation $\varrho$ afforded by  $\Hom_K^0(E,E^g)$ satisfies $\Res^{G_C}_{H_C}\theta_C\simeq g\cdot\varrho$, and has dimension $g$ provided that $E$ has no CM.
\end{proof}

\begin{prop}\label{guap} Suppose that $J(C)\sim_K E^g$, for~$E$ an elliptic curve defined over~$K$. Suppose we are either in case $(\I)$ or $(\II)$. Let $\varrho$ be as in Lemma~ \ref{restheta}. Then, one has
$$
\Ind_{H_C}^{G_C}\varrho\simeq\frac{[K:k]}{g}\cdot \theta_C.
$$
\end{prop}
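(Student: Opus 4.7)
The plan is to establish the isomorphism by Frobenius reciprocity combined with a tight dimension count, working over $\Qbar$; the identity then descends to $\Q$ because both sides are rational representations. First, the dimensions of both sides coincide:
$$
\dim\Ind_{H_C}^{G_C}\varrho=[G_C:H_C]\cdot\dim\varrho=[K:k]\cdot g=\tfrac{[K:k]}{g}\cdot g^2=\dim\bigl(\tfrac{[K:k]}{g}\theta_C\bigr).
$$

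In case $(\I)$ the representation $\theta_C$ is absolutely irreducible of dimension $g^2$. By Frobenius reciprocity and Lemma~\ref{restheta},
$$
\langle \Ind_{H_C}^{G_C}\varrho,\theta_C\rangle_{G_C}=\langle\varrho,\Res_{H_C}^{G_C}\theta_C\rangle_{H_C}=g\langle\varrho,\varrho\rangle_{H_C}.
$$
Hence the $\theta_C$-isotypic component of $\Ind\varrho$ contributes dimension $g\langle\varrho,\varrho\rangle\cdot g^2=g^3\langle\varrho,\varrho\rangle$, and this cannot exceed $\dim\Ind\varrho=g^3$. Since $\varrho\neq 0$ implies $\langle\varrho,\varrho\rangle\geq 1$, we must have $\langle\varrho,\varrho\rangle=1$ (so $\varrho$ is in fact absolutely irreducible), and the $\theta_C$-isotypic component fills all of $\Ind\varrho$. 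Therefore $\Ind_{H_C}^{G_C}\varrho\simeq g\,\theta_C=\tfrac{[K:k]}{g}\theta_C$.

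Case $(\II)$ follows by the same template, carried out over $\Qbar$. The decomposition $\theta_C\simeq\theta_1\oplus\theta_2$ into absolutely irreducible non-isomorphic summands of dimension $g^2/2$, together with $\Res\theta_1=\Res\theta_2$ and $\Res\theta_C=g\varrho$, forces $\Res\theta_i=\tfrac{g}{2}\varrho$ for $i=1,2$. Frobenius reciprocity yields $\langle\Ind\varrho,\theta_i\rangle=\tfrac{g}{2}\langle\varrho,\varrho\rangle$, so the combined dimension contributed to $\Ind\varrho$ by $\theta_1$ and $\theta_2$ is $\tfrac{g}{2}\langle\varrho,\varrho\rangle\cdot(g^2/2+g^2/2)=\tfrac{g^3}{2}\langle\varrho,\varrho\rangle$, which cannot exceed $\dim\Ind\varrho=g^3/2$. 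As before equality is forced, $\langle\varrho,\varrho\rangle=1$, and no further absolutely irreducible constituent of $G_C$ can appear in $\Ind\varrho$. Thus $\Ind_{H_C}^{G_C}\varrho\simeq\tfrac{g}{2}(\theta_1\oplus\theta_2)=\tfrac{g}{2}\theta_C=\tfrac{[K:k]}{g}\theta_C$.

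The expected obstacle is bookkeeping rather than conceptual: in case $(\II)$ one must carefully distinguish the rational representation $\varrho$ from its scalar extension to $\Qbar$ when invoking the $\Qbar$-decomposition $\theta_C\simeq\theta_1\oplus\theta_2$, and then note that the resulting $\Qbar$-isomorphism descends to $\Q$ because $\tfrac{[K:k]}{g}\theta_C$ is a rational representation with the same character as $\Ind\varrho$. A pleasant by-product of the argument is the absolute irreducibility of $\varrho$ in both cases, which is precisely what makes the sharp dimension inequality an equality.
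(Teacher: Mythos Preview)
Your proof is correct and follows essentially the same route as the paper: Frobenius reciprocity combined with a dimension count. The only cosmetic difference is that the paper phrases the inequality as $g\cdot(\varrho,\varrho)_{H_C}\geq g$ (a lower bound on the multiplicity) and then matches dimensions, whereas you bound the $\theta_C$-isotypic dimension from above by $\dim\Ind\varrho$; both arguments are two sides of the same squeeze, and your version has the pleasant side effect of making the absolute irreducibility of~$\varrho$ explicit.
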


\begin{proof}
Let $(\cdot,\cdot)_{G_C}$ and $(\cdot,\cdot)_{H_C}$ denote the scalar products on complex-valued functions on $G_C$ and $H_C$, respectively. For the case (I), by Frobenius reciprocity, the multiplicity of $\theta_C$ in $\Ind^{G_C}_{H_C}\varrho$~is
$$(\Tr\Ind^{G_C}_{H_C}\varrho,\Tr\theta_C)_{G_C}=(\Tr\varrho,\Tr\Res^{G_C}_{H_C}\theta_C)_{H_C}=g\cdot(\Tr\varrho,\Tr\varrho)_{H_C}\geq g\,.$$
Since $[K\colon k]=g^2$, the dimensions of $\Ind^{G_C}_{H_C}\varrho$ and $g\cdot\theta_C$ equal $g^3$, and the result follows.

For the case (II), observe that $\Res^{G_C}_{H_C}\theta_1=\Res^{G_C}_{H_C}\theta_2$ implies that $\Res^{G_C}_{H_C}\theta_1=g/2\cdot\varrho$. Then, the multiplicity of $\theta_1$ in $\Ind^{G_C}_{H_C}\varrho$ is
$$(\Tr\Ind^{G_C}_{H_C}\varrho,\Tr\theta_1)_{G_C}=(\Tr\varrho,\Tr\Res^{G_C}_{H_C}\theta_1)_{H_C}=\frac{g}{2}\cdot(\Tr\varrho,\Tr\varrho)_{H_C}\geq \frac{g}{2}\,,$$
from which one sees that $g/2\cdot\theta_1$ is a subrepresentation of $\Ind^{G_C}_{H_C}\varrho$. Analogously, one proves that $g/2\cdot\theta_2$ is a subrepresentation of $\Ind^{G_C}_{H_C}\varrho$. Therefore, $g/2\cdot\theta_C$ is a subrepresentation of $\Ind^{G_C}_{H_C}\varrho$ and, since they both have dimension equal to $g^3/2$, they are isomorphic.
\end{proof}

\begin{coro}\label{coroguap} Suppose that $J(C)\sim_K E^g$, for $E$~an elliptic curve defined over~$K$. Suppose we are either in case $(\I)$ or $(\II)$. Then, one has
$$
\Ind_{H_C}^{G_C}\Res_{H_C}^{G_C}\theta_C \simeq [K:k]\cdot \theta_C
$$
\end{coro}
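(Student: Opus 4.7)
The plan is to deduce this as a one-line corollary by combining the two preceding results: Lemma~\ref{restheta}, which identifies $\Res^{G_C}_{H_C}\theta_C$ with $g\cdot\varrho$, and Proposition~\ref{guap}, which computes $\Ind_{H_C}^{G_C}\varrho$ as $\frac{[K:k]}{g}\cdot\theta_C$. The only structural fact needed beyond these two statements is that induction is additive, i.e.\ it commutes with direct sums (equivalently, with multiplication by a positive integer scalar of a representation). This is a general property of the induced representation functor.

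Concretely, I would substitute the isomorphism from Lemma~\ref{restheta} into the left-hand side, pull the multiplicity $g$ outside the induction using additivity, and then apply Proposition~\ref{guap}:
$$
\Ind_{H_C}^{G_C}\Res^{G_C}_{H_C}\theta_C
\;\simeq\; \Ind_{H_C}^{G_C}(g\cdot\varrho)
\;\simeq\; g\cdot \Ind_{H_C}^{G_C}\varrho
\;\simeq\; g\cdot\frac{[K:k]}{g}\cdot\theta_C
\;=\; [K:k]\cdot\theta_C.
$$
The hypothesis that we are in case $(\I)$ or $(\II)$ is inherited from Proposition~\ref{guap}; it is what guarantees that $\varrho$ is well-defined of dimension $g$ and that the induction formula of that proposition holds.

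There is no genuine obstacle here: no character computation, no Frobenius reciprocity argument, and no appeal to the structure of $G_C$ or of $J(C)$ beyond what is already encoded in the two cited results. The only point worth checking—and this is essentially bookkeeping—is that the isomorphism $\Res^{G_C}_{H_C}\theta_C\simeq g\cdot\varrho$ of Lemma~\ref{restheta} is an honest isomorphism of $\Q[H_C]$-modules, so that applying the induction functor to both sides yields isomorphic $\Q[G_C]$-modules; this is immediate from the construction of $\varrho$ in the proof of Lemma~\ref{restheta} via the decomposition $\Phi$.
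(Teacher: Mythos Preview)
Your proof is correct and matches the paper's intent: the corollary is stated without proof there, as it is meant to follow immediately from Lemma~\ref{restheta} and Proposition~\ref{guap} by exactly the substitution-and-additivity argument you give. One minor clarification: the well-definedness and dimension of $\varrho$ come from Lemma~\ref{restheta} (via the non-CM hypothesis on $E$, which is part of both cases), while the case hypothesis $(\I)$ or $(\II)$ is needed specifically to invoke Proposition~\ref{guap}.
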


In what follows we will be particularly interested in the structure of $V_\ell(C)$ as a $\Q_{\ell}[G_K]$-module. First, we settle the following notation.  For an isomorphism $\phi\colon C'\rightarrow C$, denote by
$$\Res\lambda_\phi\colon\Gal(L/K)\rightarrow\Aut(C)$$
the restriction of the morphism $\lambda_\phi$ to the subgroup $\Gal(L/K)$. Observe that $$\Res^{G_C}_{H_C}\theta_C\circ \Res\lambda_\phi\simeq\theta(C,C';L/K)\,.$$

\begin{teo}\label{supguap} Suppose that $J(C)\sim_K E^g$, for $E$ an elliptic curve defined over $K$. Let $C'$ be a $\theta_C$-twist of $C$. Suppose that $V_\ell(C')$ is a simple $\Q_\ell[G_K]$-module. Then, one has:

$$
\theta(C,C')\otimes V_\ell(C)\simeq
\begin{cases}
\Q[\Gal(K/k)]\otimes V_\ell(C') & \text{if $(\I)$,} \\[3pt]
2\cdot \Q[\Gal(K/k)]\otimes V_\ell(C') & \text{if $(\II)$.} \\[3pt]
\end{cases}
$$
\end{teo}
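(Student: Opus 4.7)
My plan is to establish the isomorphism by matching characters over $G_k$ on both sides and then invoking semisimplicity. I will first compute both sides as $G_K$-modules (where a structural identification gives the correct multiplicity of $V_\ell(C')|_{G_K}$), and then verify that the $G_k$-character of $\theta(C,C') \otimes V_\ell(C)$ vanishes on $G_k \setminus G_K$, matching the vanishing of the $G_k$-character of $\Q[\Gal(K/k)] \otimes V_\ell(C')$ off $G_K$.

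For the $G_K$-computation, by Theorem~\ref{factorteta} we have $\theta(C,C') \simeq \theta_C \circ \lambda_\phi$; restricting to $\Gal(L/K)$ and applying Lemma~\ref{restheta}, $\theta(C,C')|_{G_K} \simeq g \cdot (\varrho \circ \Res\lambda_\phi)$. Combined with the $G_K$-isomorphism $V_\ell(C)|_{G_K} \simeq V_\ell(E)^g$ coming from $J(C) \sim_K E^g$, this gives $\theta(C,C') \otimes V_\ell(C)|_{G_K} \simeq g^2 \cdot \bigl((\varrho \circ \Res\lambda_\phi) \otimes V_\ell(E)\bigr)$. Since by Remark~\ref{remark: Fit10} the simple $\Q_\ell[G_K]$-module $V_\ell(C')|_{G_K}$ embeds in this, and since the tensor factor $(\varrho \circ \Res\lambda_\phi) \otimes V_\ell(E)$ has dimension $2g = \dim V_\ell(C')$, a multiplicity count (any simple summand of $g^2 M$ with $\dim M = 2g$ that matches $\dim V_\ell(C')$ forces $M$ itself to be simple and isomorphic to $V_\ell(C')|_{G_K}$) gives $(\varrho \circ \Res\lambda_\phi) \otimes V_\ell(E) \simeq V_\ell(C')|_{G_K}$, hence $\theta(C,C') \otimes V_\ell(C)|_{G_K} \simeq g^2 \cdot V_\ell(C')|_{G_K}$. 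This equals $[K:k]\cdot V_\ell(C')|_{G_K}$ in case $(\I)$ and $2[K:k]\cdot V_\ell(C')|_{G_K}$ in case $(\II)$, matching the $G_K$-restriction of the RHS in each case.

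To upgrade to $G_k$-character equality, the only thing left is vanishing off $G_K$. The character of the regular representation of $\Gal(K/k)$ vanishes off the identity, so the character of the RHS vanishes on $G_k \setminus G_K$. For the LHS, since $\theta(C,C') = \theta_C \circ \lambda_\phi$ and $\lambda_\phi(\sigma)$ has nontrivial $\Gal(K/k)$-component precisely when $\sigma \notin G_K$, it suffices to show $\Tr(\theta_C(\alpha,\sigma)) = 0$ for every $(\alpha,\sigma) \in G_C$ with $\sigma \neq 1$. I will deduce this from Proposition~\ref{guap}: since $H_C$ is normal in the semidirect product $G_C = H_C \rtimes \Gal(K/k)$, any conjugate of $(\alpha,\sigma)$ has the same image as $\sigma \neq 1$ in the quotient $G_C/H_C \simeq \Gal(K/k)$, so no conjugate of $(\alpha,\sigma)$ lies in $H_C$; the Frobenius formula for the induced character thus yields $\Tr(\Ind_{H_C}^{G_C}\varrho)(\alpha,\sigma) = 0$, and the nonzero-multiple relation $\frac{[K:k]}{g}\cdot\theta_C \simeq \Ind_{H_C}^{G_C}\varrho$ from Proposition~\ref{guap} gives $\Tr(\theta_C)(\alpha,\sigma) = 0$.

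Both sides are semisimple $\Q_\ell[G_k]$-modules (by Faltings for $V_\ell(C)$ and $V_\ell(C')$, together with the fact that $\theta(C,C')$ and $\Q[\Gal(K/k)]$ factor through finite quotients), so the equality of characters on $G_k$ established above forces them to be isomorphic, with the prefactor $1$ or $2$ dictated by the ratio $g^2/[K:k]$ in the two cases. The delicate point in the plan is the character-vanishing step, which I expect to be the main obstacle, though it follows cleanly from Proposition~\ref{guap} once one observes the normality of $H_C$ in $G_C$ and applies the induced-character formula.
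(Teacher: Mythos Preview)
Your proof is correct and reaches the same conclusion, but the route differs from the paper's. Both arguments begin identically: from the inclusion $V_\ell(C')\subseteq g^2\cdot(\varrho\circ\Res\lambda_\phi)\otimes V_\ell(E)$ over $G_K$, the simplicity of $V_\ell(C')$ forces $V_\ell(C')\simeq(\varrho\circ\Res\lambda_\phi)\otimes V_\ell(E)$ as $\Q_\ell[G_K]$-modules. From here the paper proceeds \emph{constructively}: it tensors this isomorphism by $g\cdot\Q[\Gal(K/k)]$ (respectively $2g\cdot\Q[\Gal(K/k)]$ in case~(II)), then applies the projection formula $\Ind_K^k(W\otimes V_\ell(C)|_{G_K})\simeq(\Ind_K^k W)\otimes V_\ell(C)$ together with the compatibility $\Ind_K^k(\varrho\circ\Res\lambda_\phi)\simeq(\Ind_{H_C}^{G_C}\varrho)\circ\lambda_\phi$ and Proposition~\ref{guap} to obtain an explicit chain of $\Q_\ell[G_k]$-module isomorphisms. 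Your argument is instead \emph{character-theoretic}: you verify that the two sides have equal traces on $G_K$ (both restrict to $g^2\cdot V_\ell(C')$) and on $G_k\setminus G_K$ (both traces vanish, the left-hand side via the induced-character formula combined with Proposition~\ref{guap}; the paper records essentially this vanishing in the proof of Corollary~\ref{signe}(ii)), and then conclude via semisimplicity. Your approach makes the coefficient $g^2/[K\!:\!k]$ more transparent, but it needs the extra input that both sides are semisimple $\Q_\ell[G_k]$-modules (Faltings over $L$, then averaging over the finite quotient $\Gal(L/k)$); the paper's direct module manipulation avoids that appeal.
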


\begin{proof}
For the case (I), recall that by Theorem $3.1$ in \cite{Fit10} there is an inclusion of $\Q_\ell[G_K]$-modules
$$
\begin{array}{ll}
V_\ell(C') & \displaystyle{\subseteq\, \theta(C,C';L/K)\otimes V_\ell(C)}\\[6pt]
 & \displaystyle{\simeq\,(\Res^{G_C}_{H_C}\theta_C\circ\Res\lambda_\phi)\otimes V_\ell(C) }\\[6pt]
 & \displaystyle{\simeq\, g^2\cdot (\varrho\circ\Res\lambda_\phi)\otimes V_\ell(E)\,.}\\[6pt]
\end{array}
$$

Since $V_\ell(C')$ is a simple $\Q_\ell[G_K]$-module, we obtain that
\begin{equation}\label{isosposdos}
V_\ell(C')\simeq(\varrho\circ\Res\lambda_\phi)\otimes V_\ell(E)\,.
\end{equation}
Now, tensoring by $g\cdot\Q[\Gal(K/k)]$ on both sides of the previous isomorphism we get
$$
\begin{array}{l@{\,\simeq\,}l}
g\cdot \Q[\Gal(K/k)]\otimes V_\ell(C') & \displaystyle{g\cdot \Ind_{K}^{k}(\varrho\circ\Res\lambda_\phi)\otimes V_\ell(E)}\\[6pt]
& \displaystyle{\Ind_{K}^{k}(\varrho\circ\Res\lambda_\phi)\otimes V_\ell(C)}\\[6pt]
 & \displaystyle{(\Ind^{G_C}_{H_C}\varrho\circ\lambda_\phi)\otimes V_\ell(C)}\\[6pt]
 & \displaystyle{ g\cdot(\theta_C\circ\lambda_\phi)\otimes V_\ell(C) }\\[6pt]
 & \displaystyle{ g\cdot\theta_\phi\otimes V_\ell(C) }\\[6pt]
 & \displaystyle{ g\cdot\theta(C,C')\otimes V_\ell(C) \,,}\\[6pt]
\end{array}
$$
where we have used that $\Ind^{G_C}_{H_C}\varrho=g\cdot\theta_C$, as seen in Proposition~\ref{guap}.
For the case~(II), everything is analogous to case (I) until equation~(\ref{isosposdos}). Then, tensoring by $2g\cdot\Q[\Gal(K/k)]$, we get
$$
\begin{array}{l@{\,\simeq\,}l}
2g\cdot \Q[\Gal(K/k)]\otimes V_\ell(C') & \displaystyle{2g\cdot \Ind_{K}^{k}(\varrho\circ\Res\lambda_\phi)\otimes V_\ell(E)}\\[6pt]
& \displaystyle{2\Ind_{K}^{k}(\varrho\circ\Res\lambda_\phi)\otimes V_\ell(C)}\\[6pt]
 & \displaystyle{2(\Ind^{G_C}_{H_C}\varrho\circ\lambda_\phi)\otimes V_\ell(C)}\\[6pt]
 & \displaystyle{ g\cdot(\theta_C\circ\lambda_\phi)\otimes V_\ell(C) }\\[6pt]
 & \displaystyle{ g\cdot\theta(C,C')\otimes V_\ell(C) \,.}\\[6pt]
\end{array}
$$
\end{proof}

\begin{coro}\label{signe} Assume the same hypothesis of Theorem~ \ref{supguap} and that one of the cases $(\I)$ or $(\II)$ holds. Let $\mathfrak p$ a prime of good reduction for both $C$ and $C'$ unramified in $L/k$. Write $a_{\mathfrak p} =\Tr\varrho_C(\Frob_{\mathfrak p})$ and $a_{\mathfrak p}' =\Tr\varrho_{C'}(\Frob_{\mathfrak p})$. Then:
\begin{enumerate}[i)]
\item If $\Frob_{\mathfrak p}\in G_K$, one has
    $$\sgn(a_{\mathfrak p}\cdot\Tr(\theta(C,C')(\Frob_{\mathfrak p})))=\sgn( a_{\mathfrak p}')\,.$$
\item If $\Frob_{\mathfrak p}\not\in G_K$, one has
$$\Tr\theta(C,C')(\Frob_{\mathfrak p})=0\,.$$
\end{enumerate}
\end{coro}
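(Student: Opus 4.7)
My plan is to reduce the corollary to a Frobenius trace computation using Theorem~\ref{supguap}. That theorem yields an isomorphism of $\Q_\ell[G_k]$-modules
$$\theta(C,C')\otimes V_\ell(C) \;\simeq\; c\cdot\Q[\Gal(K/k)]\otimes V_\ell(C'),$$
where $c=1$ in case~$(\I)$ and $c=2$ in case~$(\II)$. Evaluating the trace of $\Frob_{\mathfrak p}$ on both sides, and using that the character of the regular representation of $\Gal(K/k)$ (viewed as a $G_k$-module) takes the value $[K:k]$ on $\Frob_{\mathfrak p}$ when $\Frob_{\mathfrak p}\in G_K$ and vanishes otherwise, one obtains
$$a_{\mathfrak p}\cdot\Tr\theta(C,C')(\Frob_{\mathfrak p}) \;=\;
\begin{cases} c\cdot[K:k]\cdot a'_{\mathfrak p} & \text{if }\Frob_{\mathfrak p}\in G_K,\\[2pt] 0 & \text{if }\Frob_{\mathfrak p}\notin G_K. \end{cases}
$$

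For~(i) the factor $c\cdot[K:k]$ is a positive integer, so the displayed identity immediately yields the claimed equality of signs. The subtlety lies in~(ii): the relation above only delivers $a_{\mathfrak p}\cdot\Tr\theta(C,C')(\Frob_{\mathfrak p})=0$, which is not sufficient when $a_{\mathfrak p}=0$. To remove that dependency I would argue structurally on the twisting group~$G_C$ as follows.

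By Theorem~\ref{factorteta} one has $\theta(C,C')\simeq\theta_C\circ\lambda_\phi$, and by Corollary~\ref{coroguap} the representation $[K:k]\cdot\theta_C$ is isomorphic to an induction from~$H_C$. Since $H_C$ is normal in $G_C=H_C\rtimes\Gal(K/k)$ and consists precisely of those elements with trivial $\Gal(K/k)$-component, conjugation in $G_C$ preserves the $\Gal(K/k)$-component up to conjugacy inside $\Gal(K/k)$; consequently, no element $(\alpha,\sigma)$ with $\sigma\neq 1$ is conjugate into~$H_C$. The standard formula for the character of an induced representation then forces $\Tr\theta_C((\alpha,\sigma))=0$ whenever $\sigma\neq 1$. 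Applied to $\lambda_\phi(\Frob_{\mathfrak p})=(\phi(^{\Frob_{\mathfrak p}}\phi)^{-1},\pi_{L/K}(\Frob_{\mathfrak p}))$, whose $\Gal(K/k)$-component is nontrivial exactly when $\Frob_{\mathfrak p}\notin G_K$, this gives $\Tr\theta(C,C')(\Frob_{\mathfrak p})=0$ unconditionally.

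The main obstacle I foresee is precisely case~(ii) combined with the possibility $a_{\mathfrak p}=0$: a pure trace computation on Theorem~\ref{supguap} does not suffice in that degenerate case, and one genuinely has to invoke the normality of $H_C$ inside $G_C$ together with the induced-representation description of~$\theta_C$ coming from Corollary~\ref{coroguap} to secure the vanishing.
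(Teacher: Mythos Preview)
Your argument is correct and follows the same route as the paper: part~(i) is obtained by taking Frobenius traces in the isomorphism of Theorem~\ref{supguap} and using that the regular character of $\Gal(K/k)$ is $[K:k]$ on $G_K$, while part~(ii) is deduced from Corollary~\ref{coroguap} via the vanishing of the character of $\theta_C$ outside $H_C$, combined with $\theta(C,C')\simeq\theta_C\circ\lambda_\phi$. You are in fact slightly more explicit than the paper, both in tracking the constant $c$ in case~(II) and in spelling out why an induction from the normal subgroup $H_C$ has character supported on $H_C$.
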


\begin{proof}
Theorem \ref{supguap} implies
$$\Tr(\theta(C,C')(\Frob_{\mathfrak p}))\cdot a_{\mathfrak p}= a_{\mathfrak p}'\cdot \Tr(\Q[\Gal(K/k)](\Frob_{\mathfrak p}))\,.$$
Part $i)$ follows from the fact that if $\Frob_{\mathfrak p}\in G_K$, then $$\Tr(\Q[\Gal(K/k)](\Frob_{\mathfrak p}))=|\Gal(K/k)|\,.$$

For part $ii)$, suppose that $\Frob_{\mathfrak p}\not\in G_K$. Corollary \ref{coroguap} implies that $\Tr\theta_C(\sigma)=0$ for any $\sigma\not\in H_C$.
Then, $\Tr\theta(C,C') (\Frob_{\mathfrak p})=\Tr\theta_C\circ\lambda_\phi(\Frob_{\mathfrak p})=0$.
\end{proof}

\section{The genus $2$ case}

Throughout this section,  $C$ denotes a genus $2$ curve defined over $\Q$. Let us recall some basic facts that may be found in  \cite{CGLR99}. It is well known that $C$ admits an affine model given by a hyperelliptic equation  $Y^2=f(X)$, where $f(X)\in \Q[X]$. Any element $\alpha\in\Aut(C)$ can then be written in the form
$$\alpha(X,Y)=\left(\frac{mX+n}{pX+q},\frac{mq-np}{(pX+q)^3}Y\right)\,,$$
for unique $m,\,n\,,p\,,q\in K$. Moreover, the map
$$\Aut(C)\rightarrow \GL_2(K)\,,\qquad \alpha\mapsto
\begin{pmatrix}
  m & n \\
  p & q
\end{pmatrix}\,
$$
defines a $2$-dimensional faithful representation of $\Aut(C)$. We will often identify an automorphism of $C$ with its corresponding matrix. Note that $w(X,Y)=(X,-Y)$ is always an automorphism of $C$, called the hyperelliptic involution of~$C$, which lies in the center $Z(\Aut(C))$ of $\Aut(C)$.

The group $\Aut(C)$ is isomorphic to one of the groups
$$C_2,\,\,C_2\times C_2,\,\,D_8,\,\,D_{12},\,\,2D_{12},\,\,\tilde { S}_4,\,\, C_2\times C_5\,,$$
where $2D_{12}$ and $\tilde {S}_4$ denote certain double covers of the dihedral group of $12$ elements $D_{12}$ and the symmetric group on $4$ letters $S_4$. Completing the study initiated by Clebsch and Bolza, Igusa \cite{Igu60} computed the $3$-dimensional moduli variety $\mathcal M_2$ of genus~$2$ curves defined over $\overline \Q$. Generically, the only non-trivial automorphism of a curve in $\mathcal M_2$ is the hyperelliptic involution and, thus, $\Aut(C)\simeq C_2$. The curves with $\Aut(C)$ containing $C_2\times C_2$ constitute a surface in $\mathcal M_2$. The moduli points corresponding to curves such that $\Aut(C)$ contains $D_8$ or $D_{12}$ describe two curves contained in this surface. The curves with $\Aut(C)\simeq 2D_{12}$, $\tilde S_4$, or $C_2\times C_5$ correspond to three isolated points of $\mathcal M_2$.

In this section, we will explicitly compute the twisting representation~$\theta_C$ of~$C$ and the decomposition of $\theta(C,C')\otimes V_\ell(C)$  when $\Aut(C)\simeq D_8$ or $D_{12}$. In both cases, the irreducible characters of $G_C$ will be denoted $\chi_i$, even though they refer to different groups (we will always refer the reader to the corresponding character table in Section~\ref{Appendix}). We will denote by $\varrho_i$ a representation of character~$\chi_i$.

\begin{lema}\label{trenc} If $\Aut(C)$ is non-abelian, then $J(C)\sim_KE^2$, where $E$ is an elliptic curve defined over $K$.
\end{lema}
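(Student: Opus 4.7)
The plan is to exhibit a non-central involution of $C$, use it to decompose $J(C)$ as a product of two elliptic curves over $K$, and then deduce that these two elliptic factors are $K$-isogenous by exploiting the fact that the non-abelian group $\Aut(C)$ embeds in the units of $\End_K^0(J(C))$.

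First, I would verify that each of the non-abelian possibilities $D_8$, $D_{12}$, $2D_{12}$, $\tilde S_4$ for $\Aut(C)$ contains an involution $u$ distinct from the central hyperelliptic involution $w$. For $D_8$ and $D_{12}$ this is immediate, since any reflection works; for $2D_{12}$ and $\tilde S_4$ it can be read off from the explicit embedding $\Aut(C)\hookrightarrow\GL_2(K)$ recalled at the beginning of the section, or from the character tables in the Appendix. Fix such a non-central involution $u$. Because $u\neq w$ and $w$ gives the unique hyperelliptic structure on $C$, the quotient $C/\langle u\rangle$ is not rational, and Riemann--Hurwitz applied to the degree-$2$ cover $C\rightarrow C/\langle u\rangle$ forces it to have genus~$1$. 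Hence $E_u:=C/\langle u\rangle$ and $E_{uw}:=C/\langle uw\rangle$ are elliptic curves, both defined over~$K$ since $u$ and $w$ are $K$-rational.

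Next, I would invoke the classical splitting induced by~$u$. The endomorphism $u_*\in\End_K(J(C))$ is an involution of $J(C)$; since $w_*=-\id$, the decomposition of $J(C)$ (up to isogeny) into the $\pm 1$-eigencomponents of $u_*$ identifies these with $E_u$ and $E_{uw}$ respectively, producing a $K$-isogeny
$$
J(C)\sim_K E_u\times E_{uw}\,.
$$

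Finally, I would deduce $E_u\sim_K E_{uw}$ from non-commutativity. By construction, $\Aut(C)$ embeds into $\End_K^0(J(C))^\times$, and since $\Aut(C)$ is non-abelian, $\End_K^0(J(C))$ is a non-commutative $\Q$-algebra. If instead $E_u\not\sim_K E_{uw}$ held, then $\Hom_K^0(E_u,E_{uw})=0$ and therefore
$$
\End_K^0(J(C))\simeq \End_K^0(E_u)\times\End_K^0(E_{uw})\,,
$$
a product of fields (each $\End^0$ of a simple elliptic curve is a field) and hence commutative, contradicting the previous sentence. Thus $E_u\sim_K E_{uw}$, and setting $E:=E_u$ gives $J(C)\sim_K E^2$ with $E$ defined over~$K$. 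The step I expect to be the main obstacle is the very first one: a uniform verification that a non-central involution exists in every case, since $2D_{12}$ and $\tilde S_4$ refer to specific double covers whose full involution structure is not transparent from the name alone.
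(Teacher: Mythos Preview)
Your proposal is correct and follows essentially the same route as the paper: exhibit a non-hyperelliptic involution, take the elliptic quotient over $K$, split $J(C)$ as a product of two elliptic curves over $K$, and use the non-commutativity of $\End_K^0(J(C))$ forced by the embedding of $\Aut(C)$ to conclude the two factors are $K$-isogenous. The only cosmetic differences are that the paper cites \cite{CGLR99} for the genus computation and invokes Poincar\'e's reducibility theorem (rather than your explicit $\pm 1$-eigenspace decomposition identifying the second factor as $C/\langle uw\rangle$), and that it concludes $E\sim_K E'$ by asserting $\End_K^0(J(C))\simeq \M_2(\End_K^0(E))$ rather than your direct ``product of fields is commutative'' argument; your version is slightly more self-contained.
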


\begin{proof} It is straightforward to check that $\Aut(C)$ contains a non-hyperelliptic involution $u$. Then the quotient $E=C/\langle u\rangle$ is an elliptic curve defined over $K$ (see Lemmas 2.1 and 2.2 in \cite{CGLR99}). The injection $E\hookrightarrow J(C)$ is also defined over $K$ and Poincar\'e Decomposition Theorem ensures the existence of an elliptic curve $E'$ defined over $K$ such that $J(C)\sim_K E\times E'$. Since $\End_K(J(C))$ contains $\Aut(C)$, it is non-abelian and so  $\End_K(J(C))\simeq \M_2(\End_K(E))$, from which $E\sim_K E'$.
\end{proof}

\begin{rem}\label{uambcm} From now on, for the cases  $\Aut(C)\simeq D_8$ or $ D_{12}$, we will make the assumption that the elliptic quotient~$E$ does not have complex multiplication, i.e., $\End_K^0(J(C))\simeq \M_2(\Q)$. This only excludes a finite number of $\overline \Q$-isomorphism classes. Indeed, curves with $\Aut(C)\simeq D_8$ or $D_{12}$ defined over $\Q$ are parameterized by rational values of their absolute invariant~$u$ (see subsections \ref{subd8} and \ref{subd12} for the details). According to Proposition $8.2.1$ of \cite{Car01} the $j$-invariant of the elliptic quotient~$E$ has two possibilities
$$
j(E)=
\begin{cases}\frac{2^6(3\mp10\sqrt u)^3}{(1\mp2\sqrt u)(1\pm2\sqrt u)^2}& \text{if $\Aut(C)\simeq D_8$,}\\[6pt]
\frac{2^83^3(2\mp5\sqrt u)^3(\pm\sqrt u)}{(1\mp2\sqrt u)(1\pm 2\sqrt u)^3}& \text{if $\Aut(C)\simeq D_{12}$.}
\end{cases}
$$
Since the degree of the extension $\Q(j(E))/\Q$ is $1$ or $2$ and the number of quadratic imaginary fields of class number $1$ or $2$ is finite, we deduce that there exists only a finite number of rational absolute invariants $u$ for which $E$ has CM. According to the table on page 112 of \cite{Car01}, for $\Aut(C)\simeq D_{8}$ these values of $u$ are:
\begin{equation}\label{uCMD8}
\frac{81}{196}\,,\frac{3969}{16900}\,,\frac{-81}{700}\,,\frac{1}{5}\,,\frac{9}{32}\,,\frac{12}{49}\,,\frac{81}{320}\,,\frac{81}{325}\,,
\end{equation}
\begin{equation*}
\frac{2401}{9600}\,,\frac{9801}{39200}\,,\frac{6480}{25920}\,,\frac{194481}{777925}\,,\frac{96059601}{384238400}\,. \end{equation*}
For $\Aut(C)\simeq D_{12}$ the values of $u$ for which $E$ has CM are:
\begin{equation}\label{uCMD12}
\frac{4}{25}\,,\frac{-4}{11}\,,\frac{1}{20}\,,\frac{1}{2}\,,\frac{27}{100}\,,\frac{4}{17}\,,\frac{125}{484}\,,\frac{20}{81}\,,\frac{256}{1025}\,,\frac{756}{3025}\,,\frac{62500}{250001}\,.
\end{equation}
\end{rem}

\begin{rem}
By Lemma \ref{trenc}, if $\Aut(C)\simeq D_8$ or $D_{12}$, then for every twist~$C'$ of~$C$, one has that
$$\End_L^0(J(C))= \End_K^0(J(C))\simeq \M_2(\End_K(E))\,.$$
In other words, every twist $C'$ of $C$ is a $\theta_C$-twist of $C$.
\end{rem}

\subsection{$\Aut(C)\simeq D_8$}\label{subd8}
\begin{prop}[Proposition 2.1 of \cite{CQ07}] There is a bijection between the $\overline \Q$-isomorphism classes of genus $2$ curves defined over $\Q$ with $\Aut(C)\simeq D_8$ and the open set of the affine line
$\Q^*\smallsetminus\{1/4,9/100\}$, given by associating to each $u\in \Q^*\smallsetminus\{1/4,9/100\}$ the projective curve of equation
$$Y^2Z^3=X^5+X^3Z^2+uXZ^4\,.$$
\end{prop}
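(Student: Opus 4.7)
The plan is to normalize the hyperelliptic model of $C$ using the $D_{8}$ action, extract a single absolute invariant $u$, and then verify descent to $\Q$. Since $C$ has genus $2$, it admits a model $Y^{2}=f(X)$ with $f$ separable of degree $5$ or $6$. The hyperelliptic involution $w$ is central in $\Aut(C)$, so $\Aut(C)/\langle w\rangle$ embeds into $\PGL_{2}(\overline{\Q})$ and permutes the branch locus of $f$. When $\Aut(C)\simeq D_{8}$, this quotient is a Klein four-group, and over $\overline{\Q}$ we may conjugate it to be generated by $X\mapsto -X$ and $X\mapsto 1/X$. The six-point branch loci invariant under this action then necessarily consist of a generic orbit of size~$4$ together with one of the three fixed-point pairs $\{0,\infty\}$, $\{1,-1\}$, $\{i,-i\}$, which are interchanged by the normalizer of the Klein four-group in $\PGL_{2}$. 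Placing the distinguished pair at $\{0,\infty\}$ produces $f(X)=X(X^{2}-r^{2})(X^{2}-r^{-2})$ for some $r\in\overline{\Q}^{*}$ with $r^{4}\neq 1$.

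Next I would rescale $X\mapsto \lambda X$ (absorbed by $Y\mapsto \lambda^{5/2}Y$) to normalize the coefficient of $X^{3}$ to~$1$, producing the one-parameter family $Y^{2}=X^{5}+X^{3}+uX$ where $u$ is a rational function of $r^{2}+r^{-2}$. A routine check that the residual $\overline{\Q}$-transformations of this normal form (most notably $X\mapsto 1/X$ combined with a suitable rescaling) preserve $u$ then establishes the bijection between $\overline{\Q}$-isomorphism classes of curves with $\Aut(C)\supseteq D_{8}$ and admissible values of the invariant. Descent to $\Q$ is automatic at this point: if $C$ is defined over $\Q$, its $\overline{\Q}$-isomorphism class is $G_{\Q}$-stable and hence so is $u$, forcing $u\in\Q$; conversely, the displayed equation is itself defined over~$\Q$ whenever $u\in\Q^{*}$, so $C_{u}$ is a $\Q$-rational representative of the corresponding class.

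Finally I would identify the excluded values. The condition $u\neq 0$ (already built into $\Q^{*}$) rules out the singular model $f(X)=X^{3}(X^{2}+1)$, and $u=1/4$ corresponds to $r^{2}=\pm 1$, at which two pairs of branch points collide and the curve degenerates. The remaining exclusion $u=9/100$ must then be the only rational value at which $\Aut(C_{u})$ strictly contains $D_{8}$, i.e.~the unique $\Q$-rational intersection of the $D_{8}$-stratum of $\mathcal{M}_{2}$ with one of the higher-symmetry strata ($D_{12}$, $2D_{12}$, $\tilde{S}_{4}$, or $C_{2}\times C_{5}$). Proving that no further rational accidental enhancement occurs is the one genuinely computational step and the principal obstacle: concretely, one computes the full $\PGL_{2}$-stabilizer of the normalized branch locus $\{0,\infty,\pm r,\pm r^{-1}\}$, or equivalently compares the Igusa invariants of $C_{u}$ with those of the isolated high-symmetry moduli points, and verifies that $u=9/100$ is the only rational solution beyond the singular values already discarded.
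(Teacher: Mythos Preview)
The paper does not supply its own proof of this proposition: it is quoted verbatim as Proposition~2.1 of \cite{CQ07} and left unproved, so there is no argument in the paper to compare your attempt against.

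As a standalone sketch your outline is reasonable and follows the classical route (reduce $\Aut(C)/\langle w\rangle$ to a Klein four-group in $\PGL_2$, normalize the branch locus, extract the invariant, then handle the degenerate and extra-symmetry loci). Two places would need tightening before it becomes a proof. First, your normalization step silently chooses one of the three short orbits $\{0,\infty\}$, $\{\pm 1\}$, $\{\pm i\}$; to conclude that $u$ is a genuine $\overline\Q$-isomorphism invariant you must check not only the residual scalings you mention but also the $S_3$ of $\PGL_2$-elements permuting these three pairs, and verify they all leave $u$ unchanged. Second, for $u=9/100$ you correctly identify the mechanism (automorphism enhancement) but your phrasing ``the unique $\Q$-rational intersection \ldots'' is slightly off: the higher strata $2D_{12}$, $\tilde S_4$, $C_2\times C_5$ are isolated \emph{points} of $\mathcal M_2$, so the question is which of them lie on the $D_8$-curve at all, not which intersections are $\Q$-rational. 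Only $\tilde S_4$ contains $D_8$ (neither $2D_{12}$ nor $C_2\times C_5$ does, by an elementary involution count), and that single point gives $u=9/100$; no further computation in Igusa invariants is really needed once this group-theoretic observation is made.
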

As follows from Proposition 4.4 of \cite{CQ07}, the curve in the previous proposition is $\overline \Q$-isomorphic to
\begin{equation}\label{eqd8}
C=C_u\colon Y^2Z^4=X^6-8X^5Z+\frac{3}{u}X^4Z^2+\frac{3}{u^2}X^2Z^4+\frac{8}{u^2}XZ^5+\frac{1}{u^3}Z^6\,.
\end{equation}
where we have chosen parameters $z=0$, $s=1$ and $v=1/u$. Its group of automorphisms is computed loc. cit. in Proposition 3.3, and it is generated by
$$
U=\begin{pmatrix}
1/\sqrt 2 & 1/\sqrt{ 2u}\\
\sqrt{u/ 2} & -1/\sqrt 2
\end{pmatrix}\,,
\qquad
V=\begin{pmatrix}
0 & -1/\sqrt u\\
\sqrt u & 0
\end{pmatrix}\,,
$$
from which we see that $K=\Q(\sqrt u,\sqrt 2)$. Note that $U$ and $V$ satisfy the relations $U^2=1$, $V^4=1$ and $UV=V^3U$. For the character table of the group $G_C$, see in Section \ref{Appendix} Table \ref{taud8} if $u$ and $2u\not\in \Q^{*2}$; Table \ref{taud8p} if $u\in \Q^{*2}$; and Table \ref{taud8pp} if $2u\in \Q^{*2}$.

\begin{prop}\label{thetad8} One has
$$\Tr\theta_C=
\begin{cases}\chi_{11} & \text{ if $u$ and $2u\not\in \Q^{*2}$,}\\
\chi_{9}+\chi_{10} & \text{ if $u\in \Q^{*2}$,}\\
\chi_{6}+\chi_{7} & \text{ if $2u\in \Q^{*2}$.}
\end{cases}
$$
Moreover, $\Res^{G_C}_{H_C} \chi_9=\Res^{G_C}_{H_C}\chi_{10}$ in the second case, and $\Res^{G_C}_{H_C} \chi_6=\Res^{G_C}_{H_C}\chi_{7}$ in the third case.
\end{prop}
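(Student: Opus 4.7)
The approach is to compute the character $\Tr\theta_C$ directly, using the explicit matrix realization of $\Aut(C)\subset\GL_2(K)$, and to match the result against the character tables of Section~\ref{Appendix}.

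Applying Lemma~\ref{restheta} with $g=2$ gives $\Res^{G_C}_{H_C}\theta_C\simeq 2\varrho$ for some $2$-dimensional rational representation $\varrho$ of $H_C\simeq D_8$. Since $\theta_C$ is faithful by Theorem~\ref{factorteta}, its restriction to $H_C$ is faithful, and thus $\varrho$ is forced to be the unique faithful $2$-dimensional rational irreducible representation of $D_8$. This pins down $\Tr\theta_C$ on $H_C$.

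To compute $\Tr\theta_C$ on the remaining classes, observe that $V^2=-\mathrm{id}$ is the hyperelliptic involution, so the set $\{\mathrm{id},U,V,UV\}$ is a $\Q$-basis of $\End_K^0(J(C))\simeq\M_2(\Q)$. The action of $\Gal(K/\Q)$ on this basis is read off by applying Galois entrywise to the matrices $U,V$; for the two natural generators $\sigma_u,\sigma_2$ of $\Gal(\Q(\sqrt u,\sqrt 2)/\Q)$ one computes
$$
\sigma_u(U)=UV,\quad \sigma_u(V)=V^3,\quad \sigma_2(U)=UV^2,\quad \sigma_2(V)=V\,.
$$
The operator $\theta_C((\alpha,\sigma))\colon\psi\mapsto\alpha\circ{}^\sigma\psi$ is then a signed permutation on the chosen basis, and its trace is immediate. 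Carrying this out for one representative of each conjugacy class of $G_C$ yields the full character.

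Matching the resulting character with the relevant table from Section~\ref{Appendix} identifies it: Table~\ref{taud8} gives $\chi_{11}$ in the generic case $u,2u\notin\Q^{*2}$; Table~\ref{taud8p} gives $\chi_9+\chi_{10}$ when $u\in\Q^{*2}$; and Table~\ref{taud8pp} gives $\chi_6+\chi_7$ when $2u\in\Q^{*2}$. The claims $\Res^{G_C}_{H_C}\chi_9=\Res^{G_C}_{H_C}\chi_{10}$ and $\Res^{G_C}_{H_C}\chi_6=\Res^{G_C}_{H_C}\chi_7$ follow by direct comparison of the corresponding rows. The main obstacle is the case-by-case bookkeeping in the third step, since the structure of $G_C$ (degree of $K/\Q$, specific Galois generators, which table to use) varies among the three sub-cases, and each requires its own explicit $4\times 4$ matrix computation for the operators on the basis $\{\mathrm{id},U,V,UV\}$.
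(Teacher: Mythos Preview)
Your proposal is correct and follows the direct-computation route that the paper itself sketches as an alternative: fix the basis $\{1,U,V,UV\}$ of $\End_K^0(J(C))$, read off the Galois action on the generators, and compute traces class by class. (Incidentally, your formula $\sigma_2(U)=UV^2=-U$ is the right one; the paper's $^\tau U=UV$ is a typo.)

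The paper's primary argument is shorter and avoids most of the bookkeeping. It uses only two facts: $\theta_C$ is faithful and $4$-dimensional, and $\theta_C$ is rational. In the generic case $u,2u\notin\Q^{*2}$, inspection of Table~\ref{taud8} shows that $\chi_{11}$ is the \emph{only} faithful character of degree~$4$, so $\Tr\theta_C=\chi_{11}$ with no further work. When $u\in\Q^{*2}$, faithfulness on the class $2A$ forces $\chi_9$ or $\chi_{10}$ to occur; since these are complex conjugates and $\theta_C$ is rational, both occur and exhaust the dimension. The restriction claim then comes from Lemma~\ref{restheta} plus the fact that $D_8$ has a unique faithful irreducible. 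The case $2u\in\Q^{*2}$ is identical. What your explicit approach buys is that it is self-contained and does not rely on scanning the table for uniqueness properties; what the paper's approach buys is that it bypasses the conjugacy-class-by-conjugacy-class computation entirely.
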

\begin{proof} The dimension of $\theta_C$ is $4$. Suppose that $u$ and $2u\not\in \Q^{*2}$. By looking at the column of the conjugacy class $2A$ in Table \ref{taud8}, one sees that $\varrho_{11}$ is the only faithful representation of dimension $4$ of~$G_C$.

One can also directly compute the representation $\theta_C$. Denote by $\alpha^*$ the image of $\alpha\in \Aut(C)$ by the inclusion $\Aut(C)\hookrightarrow \End_K^0(J(C))$. We will prove that $\End^0_K(J(C))=\langle 1^*,\,U^*,\,V^*,\,U^*V^*\rangle_\Q\,.$ Indeed, it is enough to see that $1^*,\,U^*,\,V^*$ and $U^*V^*$ are linearly independent. Suppose that for certain $\lambda_i$ in $\Q$, one has
$\lambda_1 1^*+\lambda_2 U^*+\lambda_3 V^*+\lambda_4 U^*V^*=0\,.$
Conjugating by $V^*$ one obtains
$\lambda_11^*-\lambda_2 U^*+\lambda_3 V^*-\lambda_4 U^*V^*=0\,,$
which implies $\lambda_11^*+\lambda_3 V^*=0$ and thus $\lambda_1=\lambda_3=0$. Analogously, one has $\lambda_2 U^*+\lambda_4 U^*V^*=0$, that is $\lambda_21^*+\lambda_4 V^*=0$, which implies $\lambda_2=\lambda_4=0$.
Let $\sigma,\,\tau\in\Gal(K/\Q)$ be such that $\sigma(\sqrt u)=-\sqrt u$ and $\tau(\sqrt 2)=-\sqrt 2$. Now, $\theta_C$ can be computed by observing that
${}^\sigma U=UV$, ${}^{\sigma}V=V^3$, ${}^{\tau}U=UV$, ${}^{\tau}V=V$.

Suppose that $u\in \Q^{*2}$. By looking at the column of the conjugacy class $2A$ in Table \ref{taud8p}, one sees that either $\varrho_{9}$ or $\varrho_{10}$ is a constituent of $\theta_C$, since otherwise~$\theta_C$ would not be faithful. Since $\varrho_{9}=\overline\varrho_{10}$, we deduce that $\theta_C=\varrho_9+\varrho_{10}$. Moreover, by Lemma \ref{restheta}, $\Res^{G_C}_{H_C}\theta_C=2\cdot\varrho$, where $\varrho$ is a representation of $H_C\simeq D_8$. Since the only faithful representation of $D_8$ is irreducible, it follows that $\Res^{G_C}_{H_C} \varrho_9=\Res^{G_C}_{H_C}\varrho_{10}=\varrho$. The case $2u\in \Q^{*2}$ is analogous.
\end{proof}
Proposition \ref{thetad8} and Theorem \ref{supguap} implie the following result.
\begin{coro}\label{coro: D8} If $C'$ is a twist of $C$ such that $V_\ell(C')$ is a simple $\Q_\ell[G_K]$-module, then $$\theta(C,C')\otimes V_\ell(C)\simeq
\begin{cases}
\Q[\Gal(K/\Q)]\otimes V_\ell(C') & \text{if $u$ and $2u\not\in \Q^{*2}$.}\\
2\cdot\Q[\Gal(K/\Q)]\otimes V_\ell(C') & \text{if $u$ or $2u\in \Q^{*2}$.}
\end{cases}
$$
\end{coro}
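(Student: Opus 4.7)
The plan is to verify the hypotheses of Theorem~\ref{supguap} in each of the three subcases of Proposition~\ref{thetad8} and then read the conclusion off directly. By Lemma~\ref{trenc} combined with the standing non-CM assumption on the elliptic quotient $E$ (Remark~\ref{uambcm}, which excludes only the finite list~(\ref{uCMD8})), the Jacobian splits as $J(C)\sim_K E^2$ with $E$ a non-CM elliptic curve defined over $K$; and by the remark following Remark~\ref{uambcm}, every twist $C'$ of $C$ is automatically a $\theta_C$-twist. So the only remaining task is to decide, as a function of $u$, whether we sit in case~$(\I)$ or case~$(\II)$ of Section~3.

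First I would match the degree $[K:\Q]$ and the decomposition of $\theta_C$ supplied by Proposition~\ref{thetad8} against the definitions of cases~$(\I)$ and~$(\II)$. If $u,2u\notin\Q^{*2}$, then $K=\Q(\sqrt u,\sqrt 2)$ has degree $4=g^2$ over $\Q$ and $\theta_C=\chi_{11}$ is absolutely irreducible of dimension $4$, placing us squarely in case~$(\I)$. If instead $u\in\Q^{*2}$ or $2u\in\Q^{*2}$, the field $K$ collapses to a quadratic extension of $\Q$, so $[K:\Q]=2=g^2/2$; Proposition~\ref{thetad8} then writes $\theta_C$ as $\chi_9+\chi_{10}$ (resp.\ $\chi_6+\chi_7$), the two summands being non-isomorphic absolutely irreducible representations with equal restriction to $H_C$, which is exactly the content of case~$(\II)$.

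Once the relevant case has been identified, I would apply Theorem~\ref{supguap} verbatim: case~$(\I)$ yields $\theta(C,C')\otimes V_\ell(C)\simeq \Q[\Gal(K/\Q)]\otimes V_\ell(C')$, while case~$(\II)$ introduces the multiplicity $2$, producing the two lines of the statement. The only real subtlety, and it amounts to a bookkeeping check on the character tables in Section~\ref{Appendix}, is confirming that $\chi_9$ and $\chi_{10}$ (resp.\ $\chi_6$ and $\chi_7$) are genuinely non-isomorphic absolutely irreducible constituents, so that the hypothesis of case~$(\II)$ is strictly met rather than $\theta_C$ being a multiple of a single irreducible representation.
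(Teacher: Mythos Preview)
Your proof is correct and follows the same approach as the paper's own argument: identify which of cases~$(\I)$ or~$(\II)$ applies by matching $[K:\Q]$ and the decomposition of $\theta_C$ from Proposition~\ref{thetad8}, then invoke Theorem~\ref{supguap}. You spell out a bit more explicitly the verification of the standing hypotheses (Lemma~\ref{trenc}, the non-CM assumption, and the $\theta_C$-twist condition), but the substance is identical.
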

\begin{proof} If $u\in \Q^{*2}$, the fact that $\Tr\theta_C=\chi_{9}+\chi_{10}$ together with $g^2/2=[K\colon\Q]=2$, guarantees that we are in case (II) of Theorem \ref{supguap}. The case $2u\in \Q^{*2}$ is analogous. If $u$ and $2u\not\in \Q^{*2}$, then we are in case (I).
\end{proof}

\subsection{$\Aut(C)\simeq D_{12}$}\label{subd12}

\begin{prop}[Proposition 2.2 of \cite{CQ07}] There is a bijection between the $\overline \Q$-isomorphism classes of genus $2$ curves defined over $\Q$ with $\Aut(C)\simeq D_{12}$ and the open set of the affine line
$\Q^*\smallsetminus\{1/4,-1/50\}$, given by associating to each $u\in \Q^*\smallsetminus\{1/4,-1/50\}$ the projective curve of equation
$$Y^2Z^4=X^6+X^3Z^3+uZ^6\,.$$
\end{prop}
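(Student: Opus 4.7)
The plan is to adapt the argument used for the $D_8$ case in Section~\ref{subd8} (following [CQ07, Prop.~2.1]) to the symmetries of the dihedral group of order~$12$. Three ingredients are needed: produce a normal $\Qbar$-model by exploiting the element of order~$3$ inside $D_{12}/\langle w\rangle\simeq S_3$, extract a single moduli invariant $u$ and check it descends to~$\Q$, and identify the two special values at which the family either becomes singular or acquires extra automorphisms.

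Working over $\Qbar$, I would pick a non-hyperelliptic automorphism of order~$3$ and diagonalise the induced order-$3$ M\"obius transformation of $\mathbb P^1_X$ so that it acts as $X\mapsto\zeta_3 X$, placing its two (non-Weierstrass) fixed points at $0$ and~$\infty$. Invariance of $Y^2=f(X)$ under this transformation forces $f(X)=aX^6+bX^3+c$. The remaining order-$2$ element of $S_3$ must swap the two $\zeta_3$-orbits of Weierstrass points and therefore swap $\{0,\infty\}$, so it has the shape $X\mapsto \alpha/X$; compatibility with~$f$ imposes $\alpha^3=c/a$, so in particular $ac\neq 0$. A scaling of~$Y$ then normalises $a=1$, and a scaling of~$X$ by a cube root of~$b$ (forcing $b\neq 0$) brings the model into the form
$$Y^2=X^6+X^3+u,\qquad u=c/b^2\,.$$

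The scalar $u$ is manifestly invariant under these normalisations, so it depends only on the $\Qbar$-isomorphism class of~$C$. If~$C$ is defined over~$\Q$, this class is $G_\Q$-stable and therefore $u\in\Q$. Conversely, $C_u\colon Y^2Z^4=X^6+X^3Z^3+uZ^6$ is patently a curve over~$\Q$, and its geometric automorphism group contains the $D_{12}$ generated by $(X,Y,Z)\mapsto(\zeta_3X,Y,Z)$, a suitable $X\mapsto u^{1/3}/X$ with compatible action on~$Y$, and the hyperelliptic involution. Hence $u\mapsto C_u$ is a well-defined map from $\Q^*$ to the set of $\Qbar$-isomorphism classes of smooth $\Q$-curves with $\Aut(C)\supseteq D_{12}$; it is surjective, because every such~$C$ produces some $u\in\Q$ as above, and injective, because $u$ is a moduli invariant.

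Finally, the two exclusions are accounted for as follows. At $u=1/4$ the identity $X^6+X^3+1/4=(X^3+1/2)^2$ shows that the projective equation fails to define a smooth curve of genus~$2$. At $u=-1/50$ the curve is smooth but the geometric automorphism group strictly enlarges: of Igusa's exceptional automorphism groups $2D_{12}$, $\tilde S_4$ and $C_2\times C_5$, only $2D_{12}$ contains $D_{12}$ as a subgroup, so the one-dimensional $D_{12}$-moduli curve inside $\mathcal M_2$ meets the isolated $2D_{12}$-moduli point at a unique value, which direct computation identifies as $u=-1/50$. I expect this final verification to be the main obstacle: one either computes the Igusa invariants $J_2,J_4,J_6,J_{10}$ of~$C_u$ and locates their specialisation at the $2D_{12}$-point, or exhibits \emph{ad hoc} the additional automorphism appearing only at $u=-1/50$.
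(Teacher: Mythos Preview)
The paper does not prove this proposition. It is quoted as \emph{Proposition~2.2 of}~\cite{CQ07} and used as a black box to set up the explicit family~(\ref{eqd12}); no argument is supplied in the present article. There is therefore nothing in the paper to compare your sketch against.

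For what it is worth, your outline is the standard route and is essentially what~\cite{CQ07} does: pass to the reduced automorphism group $\Aut(C)/\langle w\rangle\simeq S_3$, diagonalise the order-$3$ element on~$\mathbb P^1$, and read off the normal form $Y^2=X^6+X^3+u$. Two small points you gloss over: (i) the case $b=0$ must be excluded before you rescale~$X$, and this is exactly where the hypothesis $\Aut(C)\simeq D_{12}$ (rather than $\supseteq D_{12}$) enters, since $Y^2=X^6+c$ has the larger group $2D_{12}$; (ii) the identification of $u=-1/50$ as the unique $2D_{12}$-point is, as you suspect, a computation with Igusa invariants carried out in~\cite{CQ07} (or equivalently a direct check that the extra order-$4$ symmetry appears only there). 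Neither of these is a genuine obstacle, but both require a line of justification that your sketch defers.
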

As follows from Proposition 4.9 of \cite{CQ07}, the curve of the previous proposition is $\overline \Q$-isomorphic to
\begin{equation}\label{eqd12}
\begin{array}{ll}
C=C_u\colon Y^2Z^4= & \displaystyle{27 u X^6-2916 u^2 X^5 Z+243 u^2 X^4 Z^2 +29160 u^3 X^3Z^3+ }\\
 & \displaystyle{729 u^3 X^2 Z^4-26244 u^4 X Z^5+729 u^4 Z^6\,.}\\
\end{array}
\end{equation}
This curve corresponds to the curve appearing in Proposition 4.9 of \cite{CQ07}, with choice of parameters $z=0$, $s=u$ and $v=u/3$. Its group of automorphisms is computed loc. cit. in Proposition 3.5, and is generated by
$$
U=\begin{pmatrix}
0 & \sqrt{ u}/3\\
3/\sqrt{u} & 0
\end{pmatrix}\,,
\qquad
V=\begin{pmatrix}
1/2 & -\sqrt u/\sqrt{12}\\
3\sqrt 3/ \sqrt{4 u} & 1/2
\end{pmatrix}\,,
$$
from which we see that $K=\Q(\sqrt u,\sqrt 3)$ (observe the change of two signs in the matrix~$V$ with respect \cite{CQ07}). Note that $U$ and $V$ satisfy the relations $U^2=1$, $V^6=1$ and $UV=V^5U$. For the character table of the group $G_C$, see in Section \ref{Appendix} Table~\ref{taud12} if $u$ and $3u\not\in \Q^{*2}$; Table~\ref{taud12p} if $u\in \Q^{*2}$; and Table~\ref{taud12pp} if $3u\in \Q^{*2}$ .

\begin{prop}\label{thetad12} One has
$$\Tr\theta_C=
\begin{cases}\chi_{15} & \text{ if $u$ and $3u\not\in \Q^{*2}$,}\\
\chi_{i}+\chi_{j}, \text{ for $i\not =j\in\{10,11,12\}$}  & \text{ if $u\in \Q^{*2}$,}\\
\chi_{8}+\chi_{9}  & \text{ if $3u\in \Q^{*2}$.}\\
\end{cases}
$$
Moreover, $\Res^{G_C}_{H_C}\chi_i=\Res^{G_C}_{H_C}\chi_j$ in the second case, and $\Res^{G_C}_{H_C}\chi_8=\Res^{G_C}_{H_C}\chi_9$ in the third case.
\end{prop}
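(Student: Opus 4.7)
The plan is to follow closely the blueprint of Proposition~\ref{thetad8}, with $D_8$ replaced by $D_{12}$, the field $\Q(\sqrt u,\sqrt 2)$ replaced by $K=\Q(\sqrt u,\sqrt 3)$, and Tables~\ref{taud8}--\ref{taud8pp} replaced by Tables~\ref{taud12}--\ref{taud12pp}. Three preliminary observations are in order: by Remark~\ref{uambcm}, $\End_K^0(J(C))\simeq\M_2(\Q)$, so $\dim\theta_C=g^2=4$; by Theorem~\ref{factorteta}, $\theta_C$ is a faithful rational representation of $G_C$; and the hyperelliptic involution corresponds to $V^3=-I$, which is central in $\Aut(C)$ and Galois-fixed, so the class $2A$ of $w=(V^3,1)$ is a singleton in $G_C$ and satisfies $\Tr\theta_C(w)=-4$ (since $\theta_C(w)(\psi)=V^3\psi=-\psi$).

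In Case~1 ($u,3u\notin\Q^{*2}$, $[K\colon\Q]=4$, $|G_C|=24$), direct inspection of column $2A$ in Table~\ref{taud12} shows that $\chi_{15}$ is the unique irreducible character of dimension~$4$ with value $-4$ at~$w$. Faithfulness and dimension thus force $\Tr\theta_C=\chi_{15}$, exactly as in the $D_8$ analogue.

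In Case~2 ($u\in\Q^{*2}$, $K=\Q(\sqrt 3)$, $|G_C|=12$) and Case~3 ($3u\in\Q^{*2}$, $K=\Q(\sqrt u)$, $|G_C|=12$), the constraint $\Tr\theta_C(w)=-4$ combined with the respective character tables restricts the possible $2$-dimensional irreducible constituents to $\{\chi_{10},\chi_{11},\chi_{12}\}$ in Case~2 and to a small set containing $\chi_8,\chi_9$ in Case~3. Since $\dim\theta_C=4$, exactly two such constituents enter, and the specific pair is pinned down either by grouping complex-conjugate non-rational characters (rationality of $\theta_C$) or by a direct computation of $\theta_C$ on the $\Q$-basis $\{1^*,U^*,V^*,U^*V^*\}$ of $\End_K^0(J(C))$. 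Linear independence of this basis would be verified as in Proposition~\ref{thetad8}, by conjugating a hypothetical $\Q$-linear relation successively by $U^*$ and $V^*$ and using $U^2=1$, $V^6=1$, $UV=V^5U$. The traces on conjugacy class representatives are then computed from the explicit Galois actions ${}^\sigma U,{}^\sigma V,{}^\tau U,{}^\tau V$ obtained from $\sigma(\sqrt u)=-\sqrt u$ and $\tau(\sqrt 3)=-\sqrt 3$ applied to the given matrices for $U$ and $V$, and matched against the table to isolate the correct pair $\chi_i+\chi_j$.

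Finally, the restriction identities follow from Lemma~\ref{restheta}: one has $\Res^{G_C}_{H_C}\theta_C=2\varrho$ for some $2$-dimensional rational representation $\varrho$ of $H_C\simeq D_{12}$, which must be the unique faithful irreducible $2$-dimensional representation of $D_{12}$ (this is the standard $D_{12}$ character-table fact; the other $2$-dimensional irreducible factors through the quotient by $\langle V^3\rangle$ and hence is not faithful, while a faithful constituent of $\theta_C$ is forced by $\Tr\theta_C(w)=-4$). Each constituent $\chi_i,\chi_j$ of $\theta_C$ then restricts to $\varrho$, yielding the required equalities. The main obstacle I anticipate is the concrete identification of the unordered pair $\{i,j\}\subset\{10,11,12\}$ in Case~2: the abstract faithfulness and dimension arguments narrow the field but do not pinpoint it, and isolating the exact pair requires either the explicit Galois computation on the basis above or a careful rationality analysis of the three candidate characters.
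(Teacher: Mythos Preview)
Your overall strategy mirrors the paper's proof: use $\dim\theta_C=4$, faithfulness, the value $-4$ at the hyperelliptic involution, and Lemma~\ref{restheta} to narrow down the constituents from the character tables. Two corrections and one clarification are needed.

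First, the group orders you quote are off by a factor: $|G_C|=|D_{12}|\cdot[K:\Q]$ equals $48$ in Case~1 and $24$ in Cases~2 and~3 (cf.\ the $\GAP$ identifiers in the table captions). This is cosmetic.

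Second, and more substantively, you have misread what must be proved in Case~2. The proposition does \emph{not} single out a specific pair $\{i,j\}\subset\{10,11,12\}$; it only asserts that $\Tr\theta_C$ equals one such sum. The paper's argument stops exactly where your ``abstract faithfulness and dimension arguments'' stop: from Lemma~\ref{restheta} one has $\Res^{G_C}_{H_C}\theta_C=2\varrho$ with $\varrho$ the unique faithful $2$-dimensional irreducible of $D_{12}$; since the irreducibles of $G_C$ in Table~\ref{taud12p} have dimension at most~$2$, $\theta_C$ splits as a sum of two $2$-dimensional irreducibles, and the only such sums that are faithful are $\chi_{10}+\chi_{11}$, $\chi_{10}+\chi_{12}$, $\chi_{11}+\chi_{12}$. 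So the ``main obstacle'' you anticipate is not part of the statement. Note also that your proposed rationality argument could not resolve it anyway: all three of $\chi_{10},\chi_{11},\chi_{12}$ are rational-valued in Table~\ref{taud12p}, unlike the genuinely complex-conjugate pair $\chi_8,\chi_9$ in Case~3, where your conjugation argument does work and is what the paper means by ``analogous''.

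Third, in Case~1 your sentence ``$\chi_{15}$ is the unique irreducible character of dimension~$4$ with value $-4$ at~$w$'' is true but does not by itself force $\Tr\theta_C=\chi_{15}$: you must also exclude the reducible candidates $2\chi_{13}$, $2\chi_{14}$, and $\chi_{13}+\chi_{14}$, each of which has dimension~$4$ and value $-4$ at the class~$2A$. The paper handles this explicitly by observing that none of these is faithful (indeed $\chi_{13}(3A)=\chi_{14}(3A)=2$, so the order-$3$ class lies in the kernel of both $\varrho_{13}$ and $\varrho_{14}$). Your reference to ``exactly as in the $D_8$ analogue'' is slightly misleading here, since in Table~\ref{taud8} there happen to be no competing $2$-dimensional characters with value $-2$ at~$2A$, so the reducible alternatives never arise in that case.
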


\begin{proof} The dimension of $\theta_C$ is $4$. Suppose that $u$ and $3u\not\in \Q^{*2}$. By Lemma \ref{hiper} below, and by looking at the column of the conjugacy class $2A$ in Table \ref{taud12}, one sees that $\varrho_{13}$, $\varrho_{14}$ and $\varrho_{15}$ are the only possible constituents of $\theta_C$. We deduce that $\theta_C\simeq \varrho_{15}$ from the fact that none of the representations $2\cdot\varrho_{13}$, $2\cdot\varrho_{14}$ and $\varrho_{13}\oplus\varrho_{14}$ is faithful.

One can also directly compute the representation $\theta_C$. Analogaously to the case $\Aut(C)\simeq D_8$ one has $\End^0_K(J(C))=\langle 1^*,\,U^*,\,V^*,\,U^*V^*\rangle_\Q$. Moreover, since the algebra $\langle 1^*,\,V^*\rangle$ has no zero divisors, one deduces that ${V^*}^2=V^*-1$. Let $\sigma,\,\tau\in\Gal(K/\Q)$ be such that $\sigma(\sqrt u)=-\sqrt u$ and $\tau(\sqrt 3)=-\sqrt 3$. Then ${}^\sigma U=UV^3$, ${}^{\sigma}V=V^5$, ${}^{\tau}U=U$, ${}^{\tau}V=V^5$.

Suppose that $u\in \Q^{*2}$. By Lemma \ref{restheta}, $\Res^{G_C}_{H_C}\theta_C=2\cdot\varrho$. The only faithful representation of $H_C\simeq D_{12}$ is irreducible. This, together with the fact that the dimension of the irreducible representations of $G_C$ is at most~$2$ (see Table~\ref{taud12p}), implies that $\theta_C$ is the sum of two irreducible representations of dimension~2. The only sums of two irreducible representations of dimension~2 of~$G_C$, which are faithful are $\chi_{10}+\chi_{11}$, $\chi_{11}+\chi_{12}$, or $\chi_{10}+\chi_{12}$. The case $3u\in \Q^{*2}$ is analogous.
\end{proof}

\begin{lema}\label{hiper} Let $C$ be a smooth projective hyperelliptic curve. Let $w$ be the hyperelliptic involution of $C$. Then, one has  $$\Tr\theta_C((w,\id))=-\dim\End_K^0(J(C))\,.$$
\end{lema}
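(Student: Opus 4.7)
The plan is to show that $\theta_C((w,\id))$ acts as multiplication by $-1$ on the whole space $\End_K^0(J(C))$, which immediately yields a trace equal to $-\dim_\Q \End_K^0(J(C))$.

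The key input is the classical fact that the hyperelliptic involution induces multiplication by $-1$ on the Jacobian. I would recall this by noting that $w$ acts as $-1$ on $H^0(C,\Omega^1_C)$: in the affine model $Y^2=f(X)$, a basis of regular differentials is given by $X^i\,dX/Y$ for $i=0,\dots,g-1$, and $w\colon (X,Y)\mapsto(X,-Y)$ sends each such differential to its negative. Consequently, $w$ acts as $-1$ on the cotangent space of $J(C)$ at the origin, and since $J(C)$ is an abelian variety over a field of characteristic zero (so the exponential/uniformization identifies endomorphisms with their analytic tangent action), the endomorphism of $J(C)$ induced by $w$ is precisely $-\id_{J(C)}$.

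With this identification in hand (and using the convention announced in the paper that $\alpha\in\Aut(C)$ denotes both the automorphism of $C$ and the induced endomorphism of $J(C)$), the definition (\ref{equation: twist rep}) of $\theta_C$ gives, for every $\psi\in\End_K^0(J(C))$,
$$
\theta_C((w,\id))(\psi)=w\circ\,{}^{\id}\psi=w\circ\psi=(-\id_{J(C)})\circ\psi=-\psi.
$$
Hence $\theta_C((w,\id))$ is the scalar $-1$ on $\End_K^0(J(C))$, and taking trace over $\Q$ yields
$$
\Tr\theta_C((w,\id))=-\dim_{\Q}\End_K^0(J(C)),
$$
as desired.

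There is no real obstacle: the only non-formal ingredient is the action of $w$ on differentials, which is a standard computation in any hyperelliptic coordinate system and does not depend on the particular curve $C$. Everything else is unwinding the definition of $\theta_C$ on the element $(w,\id)\in G_C$ and observing that left composition by $-\id_{J(C)}$ is multiplication by $-1$ on the whole endomorphism algebra.
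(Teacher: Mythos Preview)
Your proof is correct and follows exactly the same idea as the paper: one observes that $\theta_C((w,\id))(\psi)=-\psi$ for every $\psi$, hence the trace equals $-\dim_\Q\End_K^0(J(C))$. The paper's proof is a single line stating this identity; you have simply made explicit the standard fact (via the action of $w$ on regular differentials) that the hyperelliptic involution induces $-\id_{J(C)}$, which the paper leaves implicit.
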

\begin{proof} Observe that for $\psi\in\End_K^0(J(C))$, one has $\theta_C((w,\id))(\psi)=-\psi$.
\end{proof}

Proposition \ref{thetad12} and Theorem \ref{supguap} implie the following result.
\begin{coro}\label{coro: D12} If $C'$ is a twist of $C$ such that $V_\ell(C')$ is a simple $\Q_\ell[G_K]$-module, then $$\theta(C,C')\otimes V_\ell(C)\simeq
\begin{cases}
\Q[\Gal(K/\Q)]\otimes V_\ell(C') & \text{if $u$ and $3u\not\in \Q^{*2}$.}\\
2\cdot\Q[\Gal(K/\Q)]\otimes V_\ell(C') & \text{if $u$ or $3u\in \Q^{*2}$.}
\end{cases}
$$
\end{coro}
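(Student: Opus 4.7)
The plan is to derive Corollary~\ref{coro: D12} by combining Proposition~\ref{thetad12} with Theorem~\ref{supguap}, exactly mirroring the proof of Corollary~\ref{coro: D8}. Since $\Aut(C)\simeq D_{12}$ is non-abelian, Lemma~\ref{trenc} provides an isogeny $J(C)\sim_K E^2$ with $E/K$ an elliptic curve, and the standing assumption of Remark~\ref{uambcm} ensures that $E$ does not have CM. Thus the structural hypothesis on the Jacobian required by Theorem~\ref{supguap} is automatically in place, and the task reduces to checking that we are in case $(\I)$ or case $(\II)$ of that theorem in each of the three alternatives $u,3u\notin\Q^{*2}$, $u\in\Q^{*2}$, and $3u\in\Q^{*2}$. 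Note that these alternatives are mutually exclusive, since if both $u$ and $3u$ were squares then so would $3$ be.

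In the first alternative, $K=\Q(\sqrt u,\sqrt 3)$ has degree $4=g^2$ over $\Q$, so the degree condition of case~$(\I)$ holds. Proposition~\ref{thetad12} gives $\Tr\theta_C=\chi_{15}$, which is absolutely irreducible by Table~\ref{taud12}, so all the hypotheses of case~$(\I)$ are satisfied and Theorem~\ref{supguap} yields the first line of the statement.

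In the second alternative, $\sqrt u\in\Q$ forces $K=\Q(\sqrt 3)$, so $[K\colon\Q]=2=g^2/2$. Proposition~\ref{thetad12} gives $\Tr\theta_C=\chi_i+\chi_j$ for distinct $i,j\in\{10,11,12\}$, where by inspection of Table~\ref{taud12p} both $\chi_i$ and $\chi_j$ are absolutely irreducible and non-isomorphic, and Proposition~\ref{thetad12} itself provides the crucial equality $\Res^{G_C}_{H_C}\chi_i=\Res^{G_C}_{H_C}\chi_j$. Hence we are in case~$(\II)$, and Theorem~\ref{supguap} delivers the factor of $2$ in the second line. The third alternative is strictly analogous: $\sqrt{3u}\in\Q$ forces $\sqrt 3=\sqrt{3u}/\sqrt u$, so $K=\Q(\sqrt u)$ of degree $2$, and Proposition~\ref{thetad12} gives $\theta_C=\chi_8+\chi_9$ with $\Res^{G_C}_{H_C}\chi_8=\Res^{G_C}_{H_C}\chi_9$, putting us again in case~$(\II)$.

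There is essentially no obstacle beyond the bookkeeping just described; all of the non-trivial work (identifying the decomposition of $\theta_C$, verifying faithfulness, and checking the equality of restrictions to $H_C$) has been done in Proposition~\ref{thetad12}, while the passage from that decomposition to the tensor-product identity is supplied wholesale by Theorem~\ref{supguap}. The only step that deserves a moment's care is the verification that each of the three square-class alternatives matches precisely one of the two cases of Theorem~\ref{supguap} through the degree computation of $K/\Q$.
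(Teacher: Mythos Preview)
Your proposal is correct and follows the same approach as the paper's proof: the paper's argument is precisely to observe that Proposition~\ref{thetad12} places us in case~$(\I)$ of Theorem~\ref{supguap} when $u,3u\notin\Q^{*2}$ (since $\Tr\theta_C=\chi_{15}$ and $[K\colon\Q]=4=g^2$) and in case~$(\II)$ otherwise. Your write-up simply spells out in more detail the degree computations for $K/\Q$ and the verification of the hypotheses of case~$(\II)$, all of which are implicit in the paper's one-line proof.
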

\begin{proof} If $u$ and $3u\not\in \Q^{*2}$, the fact that $\Tr\theta_C=\chi_{15}$ together with $g^2=[K\colon\Q]=4$, guarantees that we are in case (I) of Theorem \ref{supguap}. If $u$ or $3u\in \Q^{*2}$, then we are in case (II).
\end{proof}

\subsection{$L$-functions of twisted genus $2$ curves}

Now the proof of Theorem \ref{theorem: application} is immediate. If $p$ is an unramified prime in $L/\Q$, then the reciprocal of the characteristic polynomial of $\Frob_p$ acting on the $\Q_\ell[G_\Q]$-module at the left-hand side of the isomorphism of Corollary \ref{coro: D8} or Corollary \ref{coro: D12} is $L_{ p}(C/\Q,\theta_C\circ \lambda_\phi,T)$. Recall that $f$ denotes the residue class degree of $p$ in $K/\Q$. The result follows from the fact that the right-hand side of the isomorphism of Corollary \ref{coro: D8} or Corollary \ref{coro: D12} is of the form $\varrho\otimes V_\ell(C')$, where $\varrho$ is a 4-dimensional representation of $\Gal(K/\Q)$ such that $\varrho(\Frob_p)$ has four eigenvalues equal to 1 if $f=1$, and two eigenvalues equal to 1, and two equal to -1 if $f=2$.

Observe that thanks to Theorem \ref{theorem: application}, from the local factor $L_{p}(C/\Q,T)$ and the representation~$\theta(C,C')\simeq \theta_C\circ\lambda_\phi$, either the polynomial $L_{ p}(C'/\Q,T)$ or the product $L_{ p}(C'/\Q,T)\cdot L_{ p}(C'/\Q,-T)$ can be determined. The indeterminacy of the sign of~$a_p'$ which follows from the product $L_{ p}(C'/\Q,T)\cdot L_{ p}(C'/\Q,-T)$, can not be handled with the relation
$$\sgn(\Tr(\theta(C,C')(\Frob_{ p}))=\sgn(a_{ p}\cdot a_{ p}')\,.$$
from Proposition \ref{signe}, since this relation only holds for~$f=1$.

\newpage
\section{Appendix: Character tables of twisting groups}\label{Appendix}

In the following tables, the notation $\GAP(n,m)$ indicates the $m$-th group of order $n$ in the ordered list of finite groups of \cite{Gap}.

\begin{table}[h]
\scriptsize
$$
\begin{array}{r|rrrrrrrrrrr}
\rm{Class} &   1A & 2A & 2B & 2C & 2D & 2E & 4A & 4B & 4C & 8A & 8B  \\\hline
\rm{Size}  &   1 & 1 & 2 & 4 & 4 & 4 & 2 & 2 & 4 & 4 & 4\\ \hline
\chi_{ 1} & 1 & 1 & 1 & 1 & 1 & 1 & 1 & 1 & 1 & 1 & 1\\
\chi_{2} &  1 & 1 &-1 & 1 &-1 & 1 & 1 &-1 &-1 &-1 & 1\\
\chi_3 &  1 & 1 & 1 & 1& -1 &-1 & 1 & 1 & 1& -1& -1\\
\chi_4 &  1 & 1& -1 & 1 & 1 &-1 & 1 &-1 &-1 & 1 &-1\\
\chi_5 &  1&  1& -1 &-1 & 1 &-1 & 1 &-1 & 1& -1 & 1\\
\chi_6 &  1 & 1 & 1 &-1 &-1 &-1 & 1 & 1 &-1 & 1 & 1\\
\chi_7  & 1 & 1 &-1 &-1 &-1&  1&  1& -1&  1&  1& -1\\
\chi_8  & 1 & 1 & 1 &-1 & 1 & 1&  1 & 1 &-1 &-1 &-1\\
\chi_9  & 2 & 2  &2 & 0 & 0 & 0 &-2 &-2 & 0 & 0 & 0\\
\chi_{10} &  2 & 2 &-2 & 0 & 0 & 0 &-2&  2 & 0 & 0 & 0\\
\chi_{11} &  4& -4 & 0 & 0 & 0 & 0 & 0  &0 & 0 & 0 & 0\\
\end{array}
$$
\caption{Character table of $D_{8}\rtimes (C_2\times C_2)\simeq \GAP(32,43)$} \label{taud8}
\end{table}

\begin{table}[h]
\scriptsize
$$
\begin{array}{r|rrrrrrrrrr}
\rm{Class} &  1A & 2A & 2B & 2C & 2D & 4A & 4B & 4C & 4D & 4E  \\\hline
\rm{Size}  &  1 & 1 & 2 & 2 & 2 & 1 & 1 & 2 & 2 & 2 \\ \hline
\chi_1 & 1 & 1 & 1 & 1 & 1 & 1 & 1 & 1 & 1 & 1\\
\chi_2 & 1 & 1 &-1 & 1 & 1 & -1 & -1& 1 &-1 &-1\\
\chi_3 & 1 & 1 &-1 &-1 &-1 & -1 & -1 &  1 & 1 & 1\\
\chi_4 & 1 & 1 & 1 &-1 &-1 &  1 &  1 & 1 &-1 &-1\\
\chi_5 & 1 & 1 & 1 &-1 & 1 & -1 & -1 &-1 & 1 &-1\\
\chi_6 & 1 & 1 & 1 & 1 &-1 & -1 & -1 &-1 &-1 & 1\\
\chi_7 & 1 & 1 &-1 &-1 & 1 &  1 &  1 &-1 &-1 & 1\\
\chi_8 & 1 & 1 &-1 & 1 &-1 &  1 &  1 &-1  &1 &-1\\
\chi_9 & 2 &-2 & 0 & 0 & 0 & 2i & -2i & 0 & 0 & 0\\
\chi_{10} & 2 &-2 & 0  &0 & 0 & -2i & 2i & 0 & 0 & 0 \\
\end{array}
$$
\caption{Character table of $D_{8}\rtimes C_2\simeq \GAP(16,13)$} \label{taud8p}
\end{table}

\begin{table}[h]
\scriptsize
$$
\begin{array}{r|rrrrrrr}
\rm{Class} &  1A & 2A & 2B & 2C & 4A & 8A & 8B\\\hline
\rm{Size}  & 1 & 1 & 4 & 4 & 2 & 2 & 2 \\\hline
\chi_1  &  1 & 1 & 1 & 1 & 1 &  1 &  1\\
\chi_2  &  1 & 1 &-1 &-1 & 1  & 1 &  1\\
\chi_3 &  1 & 1 & -1 & 1 & 1 & -1 & -1\\
\chi_4 & 1 & 1 & 1& -1 & 1 & -1 & -1\\
\chi_5  &  2 & 2 & 0 & 0& -2 &  0 &  0\\
\chi_6  &  2& -2&  0 & 0 & 0 & \zeta_8 &-\zeta_8\\
\chi_7  &  2 &-2 & 0 & 0 & 0& -\zeta_8 & \zeta_8\\
\end{array}
$$
\caption{Character table of $D_{8}\rtimes C_2\simeq \GAP(16,7)$} \label{taud8pp}
\end{table}

\begin{table}
\scriptsize
$$
\begin{array}{r|rrrrrrrrrrrrrrr}
\rm{Class} &   1A & 2A & 2B & 2C & 2D & 2E & 2F & 2G & 3A & 4A & 4B & 6A & 6B & 6C & 12A \\\hline
\rm{Size}  &   1 & 1 & 2 & 2& 3 & 3 & 6 & 6 & 2&2 & 6&2 & 4&4&4 \\ \hline
\chi_{ 1} &   1 & 1 & 1 & 1 & 1 & 1 & 1 & 1 & 1 & 1 & 1 & 1 & 1 & 1 & 1\\
\chi_{ 1} &  1 & 1 & 1 &-1 &-1 &-1 &-1 & 1 & 1 &-1 & 1 & 1 & 1 &-1 &-1\\
\chi_{ 1} &  1 & 1 &-1 & 1 &-1 &-1 & 1&-1 & 1 &-1 & 1 & 1 &-1 & 1 &-1\\
\chi_{ 1} &   1 & 1 &-1 &-1 & 1 & 1 &-1 &-1 & 1 & 1 & 1 & 1& -1& -1&  1\\
\chi_{ 1} &   1 & 1 & 1 & 1 &-1 &-1& -1& -1&  1&  1 &-1 & 1 & 1 & 1 & 1\\
\chi_{ 1} &  1 & 1 & 1& -1&  1 & 1 & 1 &-1 & 1 &-1 &-1 & 1 & 1 &-1& -1\\
\chi_{ 1} &   1 & 1& -1&  1 & 1 & 1 &-1 & 1 & 1 &-1 &-1 & 1 &-1 & 1 &-1\\
\chi_{ 1} &   1  &1 &-1& -1& -1& -1&  1 & 1 & 1 & 1 &-1 & 1 &-1& -1 & 1\\
\chi_{ 1} &   2 & 2 & 2 & 2 & 0 & 0 & 0 & 0 &-1 & 2 & 0 &-1& -1& -1& -1\\
\chi_{ 10} &   2 & 2 &-2 &-2 & 0 & 0 & 0 & 0 &-1 & 2 & 0& -1&  1 & 1& -1\\
\chi_{ 11} &   2 & 2 & 2 &-2 & 0 & 0 & 0 & 0 &-1 &-2 & 0& -1 &-1 & 1  &1\\
\chi_{ 12} &   2 & 2 &-2 & 2 & 0 & 0 & 0 & 0 &-1 &-2 & 0 &-1 & 1 &-1 & 1\\
\chi_{ 13} &   2 &-2 & 0 & 0 &-2 & 2 & 0 & 0 & 2 & 0 & 0 &-2 & 0 & 0 & 0\\
\chi_{ 14} &   2 &-2 & 0 & 0 & 2 &-2  &0 & 0 & 2 & 0 & 0 &-2 & 0 & 0 & 0\\
\chi_{ 15} &   4 &-4 & 0 & 0 & 0 & 0  &0 & 0 &-2 & 0 & 0 & 2 & 0 & 0 & 0\\
\end{array}
$$
\caption{Character table of $D_{12}\rtimes (C_2\times C_2)\simeq \GAP(48,38)$} \label{taud12}

$$
\begin{array}{r|rrrrrrrrrrrr}
\rm{Size}  & 1 & 1 & 1 & 1 & 3 & 3 & 3 & 3 & 2 & 2 & 2 & 2 \\\hline
\rm{Class} &  1A & 2A & 2B & 2C & 2D & 2E & 2F & 2G & 3A & 6A & 6B & 6C\\\hline
\chi_1 & 1 & 1 & 1 & 1 & 1 & 1 & 1 & 1 & 1 & 1 & 1 & 1\\
\chi_2 & 1 &-1 &-1 & 1 & 1 &-1 &-1 & 1 & 1 &-1 & 1 &-1\\
\chi_3 & 1 &-1 & 1 &-1 &-1 &-1 & 1 & 1 & 1 &-1 &-1 & 1\\
\chi_4 &  1 & 1 &-1 &-1 &-1 & 1 &-1 & 1 & 1 & 1 &-1 &-1\\
\chi_5 & 1 & 1 & 1 & 1 &-1 &-1 &-1 &-1 & 1 & 1 & 1 & 1\\
\chi_6 & 1 &-1 &-1 & 1 &-1 & 1 & 1 &-1 & 1 &-1 & 1 &-1\\
\chi_7 & 1 &-1 & 1 &-1 & 1 & 1 &-1 &-1 & 1 &-1 &-1 & 1\\
\chi_8 & 1 & 1 &-1 &-1 & 1 &-1 & 1 &-1 & 1 & 1 &-1 &-1\\
\chi_9 & 2 & 2 & 2 & 2 & 0 & 0 & 0 & 0 &-1 &-1 &-1 &-1\\
\chi_{10} & 2 &-2 &-2 & 2 & 0 & 0 & 0 & 0 &-1 & 1 &-1 & 1\\
\chi_{11} & 2 & 2 &-2 &-2 & 0 & 0 & 0 & 0 &-1 &-1 & 1 & 1\\
\chi_{12} & 2 &-2 & 2 &-2 & 0 & 0 & 0 & 0 &-1 & 1 & 1& -1\\
\end{array}
$$
\caption{Character table of $D_{12}\rtimes C_2\simeq \GAP(24,14)$} \label{taud12p}

$$
\begin{array}{r|rrrrrrrrr}
\rm{Class}  & 1A & 2A & 2B & 2C & 3A & 4A & 6A & 6B & 6C\\\hline
\rm{Size}   & 1 & 1 & 2 & 6 & 2 & 6 & 2 & 2 & 2\\\hline
\chi_1 & 1 & 1 & 1 & 1 & 1 & 1 & 1  &  1&  1 \\
\chi_2 & 1 & 1 & 1 &-1 & 1 &-1 &    1  &   1 & 1\\
\chi_3 & 1 & 1 &-1 &-1 & 1 & 1 &   -1  &  -1 & 1\\
\chi_4 & 1 & 1 &-1 & 1 & 1 &-1 &   -1  &  -1 & 1\\
\chi_5 & 2 & 2 &-2 & 0 &-1 & 0 &    1  &   1 &-1\\
\chi_6 & 2 &-2 & 0 & 0 & 2 & 0 &    0  &   0 &-2\\
\chi_7 & 2 & 2 & 2 & 0 &-1 & 0 &   -1  &  -1 &-1\\
\chi_8 & 2 &-2 & 0 & 0 &-1 & 0 & -\sqrt{-3}&  \sqrt{-3} & 1\\
\chi_9 & 2 &-2 & 0 & 0 &-1 & 0 & \sqrt{-3} &  -\sqrt{-3} & 1\\
\end{array}
$$
\caption{Character table of $D_{12}\rtimes C_2\simeq \GAP(24,8)$} \label{taud12pp}
\end{table}

\newpage

\end{document}